\documentclass{article}

\usepackage{graphicx}
\usepackage{caption}
\usepackage{subcaption}
\usepackage{setspace}
\usepackage{hyperref}
\usepackage{amsmath}
\usepackage{amssymb}
\usepackage{amsthm}
\usepackage{mathtools}
\usepackage[a4paper, total={5.8in, 9.55in}]{geometry}
\usepackage{xcolor}

\newcommand{\bize}{\begin{itemize}}
\newcommand{\eize}{\end{itemize}}
\newcommand{\beq}{\begin{equation}}
\newcommand{\eeq}{\end{equation}}
\newcommand{\bluea}{(a)}
\newcommand{\blueb}{(b)}
\newcommand{\bluec}{(c)}
\newcommand{\blued}{(d)}
\newcommand{\bluee}{(e)}
\newcommand{\bluef}{(f)}


\newtheorem{definition}{\bf Definition}[section]

\newtheorem{remark}{\bf Remark}[section]
\newtheorem{proposition}{\bf Proposition}[section]


\title{
Phase tipping: How cyclic ecosystems respond to contemporary climate
}
\author{Hassan Alkhayuon\footnote{University College Cork, School of Mathematical Sciences, Western Road, Cork, T12 XF62, Ireland}, Rebecca C. Tyson\footnote{Department of Mathematics and Statistics, University of British Columbia
Okanagan, Kelowna, BC, Canada}, and Sebastian Wieczorek$^*$}

\date{June 2021}

\begin{document}
\maketitle
\begin{abstract}

We identify the {\em phase of a cycle} as a new critical factor for 
tipping points (critical transitions) in cyclic systems subject 
to time-varying external conditions. As an example, we consider 
how contemporary climate variability induces tipping from a 
predator-prey cycle to extinction in two paradigmatic predator-prey 
models with an Allee effect. 
Our analysis of these examples uncovers a counter-intuitive 
behaviour, which we call {  phase tipping} or {\em P-tipping},
where tipping to extinction occurs only from certain phases of the cycle. To explain this behaviour, we combine global dynamics with set theory  and introduce the concept of {\em partial basin instability} for 
{  attracting} limit cycles. This concept  provides a general framework to analyse and identify 
{  easily testable}
criteria for the occurrence of 
 {  phase tipping} in externally forced systems, {  and can be extended to more complicated attractors.}

\end{abstract}

\maketitle
\section{Introduction}

Tipping points or critical transitions are fascinating nonlinear phenomena that are known to occur in complex systems subject to changing external conditions or external inputs. They are ubiquitous in nature and, in layman's terms, can be described as large, sudden, and unexpected changes in the state of the system triggered by small or slow changes in the external inputs~\cite{scheffer2009,lenton2008}. Owing to potentially catastrophic and irreversible changes associated with tipping points, it is important to identify and understand the underlying dynamical mechanisms that enable such transitions.  
 To do so, it is helpful to consider base states (attractors for fixed external conditions) whose position or stability change as the external conditions vary over time.
Recent work on tipping from base states that are stationary (attracting equilibria) has been shown to result from three generic tipping mechanisms~\cite{ashwin2012}:

\begin{itemize}
    \item 
    {\em Bifurcation-induced tipping or B-tipping} occurs when the external input passes through a {dangerous bifurcation} of the base state, at which point
    {\em the base state disappears or turns unstable}, forcing the system to move to a different state~\cite{thompson1994,thompson2011,kuehn2011}.
     \item
     {\em Rate-induced tipping or R-tipping} occurs 
     when  {\em the external input  varies too fast}, so the system deviates too far from the moving base state and crosses some tipping threshold~\cite{wieczorek2011,perryman2014,vanselow2019,wieczorek2020,kuehn2020}, e.g. into the domain of attraction of a different state~\cite{scheffer2008,o2019,ashwin2017,alkhayuon2019,Hartl2019,kiers2020}. 
      The special case of delta-kick external input is referred to as {\em shock-tipping} or {\em S-tipping}~\cite{halekotte2020}. 
      In contrast to B-tipping, 
     R-tipping need not involve any  bifurcations of the base state.
     \item
     {\em Noise-induced tipping or N-tipping} occurs when external {\em random fluctuations}  drive the system too far from the base state and past some tipping threshold~\cite{franovic2018}, e.g. into the domain of attraction of a different state~\cite{benzi1981,ditlevsen2010,ritchie2016,chen2019}.
\end{itemize}

Many complex systems have non-stationary base states, meaning that these systems exhibit regular or irregular self-sustained oscillations for fixed external inputs~\cite{freund2006,medeiros2017,alkhayuon2018, bathiany2018,kaszas2019,keane2020,longo2020,lohmann2021}. 
Such base states open the possibility for other generic tipping mechanisms when the external inputs vary over time.
In this paper, we focus on tipping from the next most complicated base state, a periodic state (attracting limit cycle), and identify a new tipping mechanism: 
\begin{itemize}
    \item {  Phase tipping (Partial tipping {\cite{alkhayuon2018}}) or P-tipping} occurs when a {\em too fast change} or {\em random fluctuations}
    in the external input cause the system to tip to a different state, but only from {\em certain  phases} { \em (or certain parts)} of the base state
    {  and its neighbourhood}.
    In other words, the system has to be in the right phases to tip, whereas no tipping occurs from other phases. 
\end{itemize}

The concept of P-tipping naturally extends to more complicated 
quasiperiodic (attracting tori) and  chaotic (strange attractors) 
base states and, in a certain sense, unifies the notions of R-tipping, S-tipping and
N-tipping. A simple intuitive picture is
that external inputs can trigger the system past some tipping
threshold, but only from  {\em certain parts} of the base state {  and its neighbourhood}. Thus, 
P-tipping can also be interpreted as {\em partial tipping}. Indeed, 
examples of P-tipping with smoothly changing external inputs include the recently studied
``partial R-tipping" from periodic base states~\cite{alkhayuon2018}, 
and probabilistic tipping  from chaotic base states~\cite{ashwin2021,kaszas2019,lohmann2021}. 
Furthermore, P-tipping offers 
new insight into classical phenomena such as stochastic resonance~\cite{benzi1981,anishchenko1993,gammaitoni1998}, where noise-induced transitions 
between coexisting non-stationary states occur (predominantly) from 
certain phases of these states and at an optimal noise strength. 
Other examples of P-tipping due to random fluctuations include ``state-dependent vulnerability of synchronization" in complex networks~\cite{medeiros2019}, and ``phase-sensitive excitability" from periodic states~\cite{franovic2018}, which can be interpreted as partial N-tipping.

 Here, we construct a general mathematical framework to analyse {\em irreversible P-tipping from periodic base states}. By ``irreversible" we mean that the system 
 approaches a different state in the long-term.  The framework allows us to explain counter-intuitive properties, identify the underlying dynamical mechanism, and give easily testable criteria for the occurrence of P-tipping.
Furthermore, motivated by growing evidence that tipping points in the Earth system 
could be more likely than was thought~\cite{lenton2008, rockstrom:2009, barnosky:2012}, we show that P-tipping could occur in real ecosystems subject to contemporary climate change. To be more specific, we uncover robust P-tipping from {  predator-prey oscillations} 
to extinction due to climate-induced decline in  prey resources in two paradigmatic predator-prey models with an Allee effect: the Rosenzweig-McArthur (RMA) model~\cite{rosenzweig1963} and the May (or Leslie-Gower-May) model~\cite{May2019}. 
{  Intuitively, the phase sensitivity of tipping from predator-prey  oscillations 
arises because a given drop in prey resources has distinctively different effects when applied during the phases of the cycle with the fastest growth and the fastest decline of  prey.}
Both the RMA and May models have been used to study predator-prey interactions in a 
number of natural systems~\cite{vitense:2016, sauve:2020, sarker:2020}. 
Here, we use realistic parameter values for the Canada lynx and snowshoe hare system~\cite{tyson:2009, strohm:2009}, together with real climate records from various communities in the boreal and deciduous-boreal forest~\cite{Marley2020}. 

The nature of predator-prey interactions often leads to regular, high amplitude, multi-annual cycles~ \cite{kendall1998}. Consumer-resource and host-parasitoid interactions are similar, and also often lead to dramatic cycles~\cite{turchin:2003}.  In insects, cyclic outbreaks can be a matter of deep economic concern, as the sudden increase in defoliating insects leads to significant crop damage~\cite{stewart:1993}.  In the boreal forest, one of the most famous predator-prey cycles is that of the Canada lynx and snowshoe hare~\cite{turchin:2003}.  The Canada lynx is endangered in parts of its southern range, and the snowshoe hare is a keystone species in the north, relied upon by almost all of the mammalian and avian predators there~\cite{krebs:2001}.  These examples illustrate the ubiquitous nature of cyclic predator-prey interactions, and their significant economic and environmental importance.  Their persistence in the presence of climate change is thus a pressing issue.  

Anthropogenic and environmental factors are subjecting cyclic predator-prey systems to external forcing which, through climate change, is being altered dramatically in both spatial and time-dependent ways~\cite{sauve:2020, vanderbolt:2018, richardson:2005, harley:2011, ummenhofer:2016, pauli:2003}.  In addition to long-term changes due to global warming, 
there is a growing interest in changes in climate variability on year-to-decade timescales, owing to its more imminent impacts~\cite{zimmerman:2020}.  In particular, increased variability of short-term climatic events manifests itself as, for example, larger hurricanes, hotter heatwaves, and more severe floods~\cite{elkenawy:2020, spinoni:2018, smale:2019, veh:2020, bloeschl:2019, depietri:2018, ummenhofer:2016, knutson:2020, chauvin:2020}. It is unknown how cyclic predator-prey systems will interact with these changes in climate variability.

Beyond ecology, oscillatory predator-prey interactions play an important role in finance and economics~\cite{mehlum2003, mehlum2006}. Thus, our work may also be relevant for understanding economies in developing countries~\cite{mirza2019}. Such economies are non-stationary by nature, and it  may well be that developing countries have only short phases in their development, or narrow windows of opportunity, during which  external investments can induce transitions from poverty to wealth.

This paper is organised as follows. In Section~\ref{sec:ppm}, we 
introduce the Rosenzweig-MacArthur and May models, 
define phase for the predator-prey {  oscillations}, and describe the 
random processes used  to model climatic variability. 
In Section~\ref{sec:cipst}, Monte Carlo simulations of the predator-prey models reveal counter-intuitive 
properties of P-tipping and highlight the key differences 
from B-tipping. 
In Section~\ref{sec:basin_instability}, we present a geometric framework for P-tipping and define the concept of 
{\em partial basin instability} for attracting limit cycles.
In Section~\ref{sec:basin_instability_PP}, we produce two-parameter bifurcation diagrams for the autonomous predator-prey frozen systems 
with fixed-in-time external inputs, identify bistability between predator-prey cycles and extinction, and  reveal parameter regions of partial basin instability - these cannot be captured by classical bifurcation analysis but are essential for understanding P-tipping.
Finally, we show that partial basin instability explains and gives testable 
criteria for the occurrence of P-tipping. We summarise our results in Section~\ref{sec:conclusions}.

\section{Oscillatory predator-prey models with varying climate.}
\label{sec:ppm}

{  We carry out our study of P-tipping in the context of two paradigmatic predator-prey models, which we present here.  We also define  "phase" in the context of the predator-prey limit cycles and nearby oscillations. Finally, we introduce our climate variability model.}

\subsection{The Rosenzweig-MacArthur and May models}

\begin{figure}[]
    \centering 
    \includegraphics[width=1\textwidth]{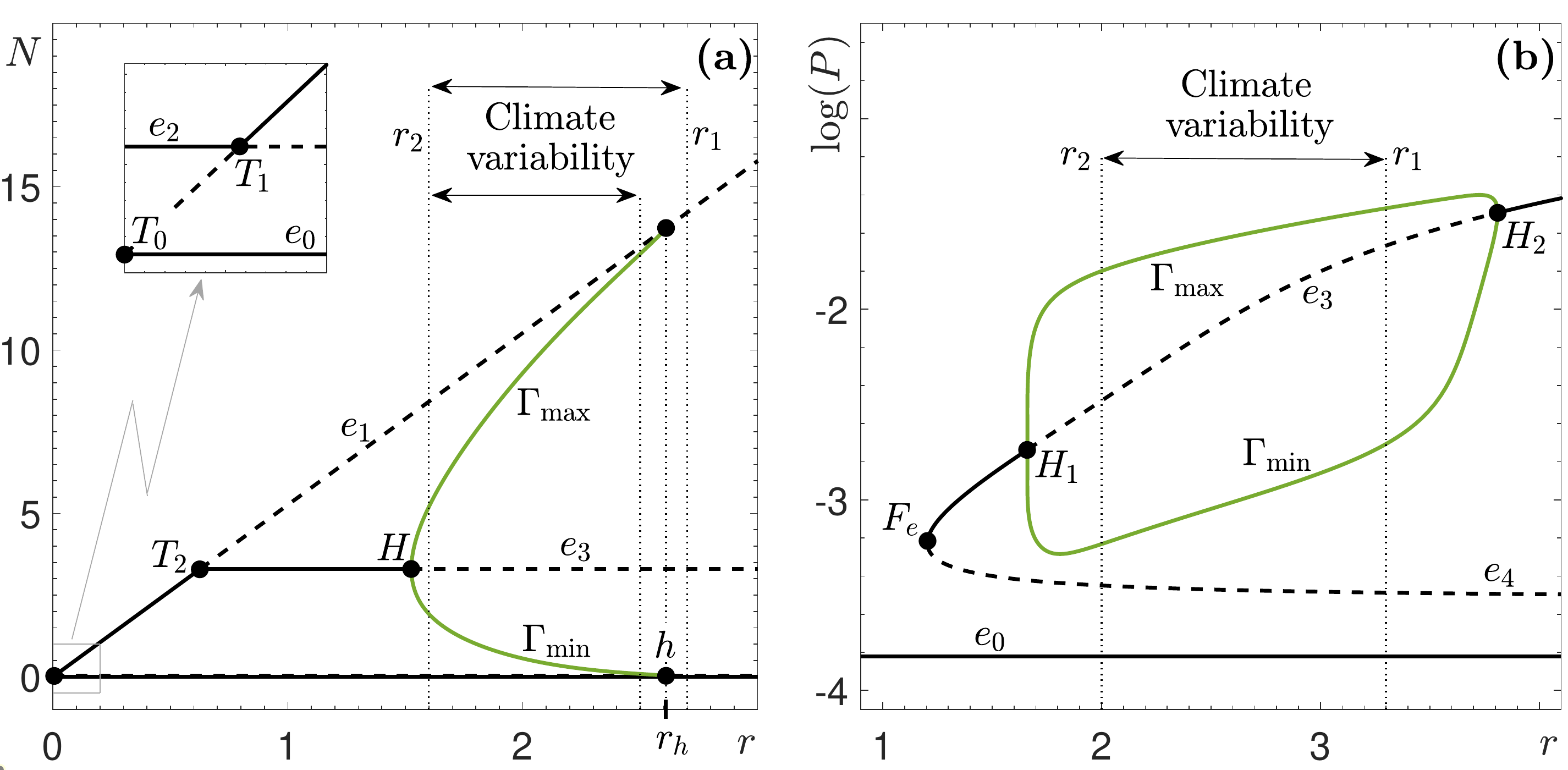}
    \caption{One-parameter bifurcation diagrams 
    with different but fixed-in-time $r$ for (a) the autonomous RMA frozen model~\eqref{eq:RM1} and (b) the autonomous May frozen model~\eqref{eq:May1}. The other parameter values are given in Appendix~\ref{app:maymodel}, Table~\ref{tab:parValues}. 
    }
    \label{fig:1parBD}
\end{figure}

The Rosenzweig-MacArthur (RMA) model~\cite{rosenzweig1963,vanselow2019} describes the time evolution of interacting prey $N$ and predator $P$ populations~\cite{Berryman1992}:
\begin{equation}\label{eq:RM1}
\begin{split}
    \dot{N}&=   r(t)\,N\left( 1- \frac{c}{r(t)} N\right)\left(\frac{N-\mu}{\nu+N}\right) - \frac{\alpha NP}{\beta + N},\\
    \dot{P} &= \chi\,\frac{\alpha NP}{\beta +N} - \delta P.
    \end{split}
\end{equation}
In the prey equation, $-r(t)\mu/\nu$ is the low-density (negative) prey growth rate, $c\mu/\nu$  quantifies the nonlinear modification of the low-density
prey growth,
the term $(N-\mu)/(\nu + N)$ gives rise to the strong Allee effect that accounts for negative prey growth rate at low prey population density,
$\alpha$ is the saturation predator kill rate, and $\beta$ is the predator kill half-saturation constant. 
The ratio $r(t)/c$ is often referred to as the {\em carrying capacity} of the ecosystem. It is the maximum prey population that can be sustained by the environment in the absence of predators~\cite{strohm:2009}.
In the predator equation, $\chi$ represents the prey-to-predator conversion ratio, and $\delta$ is the predator mortality rate. 
Realistic parameter values, estimated from Canada lynx and snowshoe hare data \cite{strohm:2009,tyson:2009}, can be found in Appendix \ref{app:maymodel}, Table~\ref{tab:parValues}.

As we explain in Sec.~\ref{sec:ppm}\ref{sec:climvar}, $r(t)$ is a piecewise constant function of time that describes the varying climate.
This choice makes the nonautonomous system~\eqref{eq:RM1} piecewise autonomous in the sense that it behaves like an autonomous system over finite time intervals. Therefore, much can be understood about the behaviour of the nonautonomous system~\eqref{eq:RM1}  by looking at the autonomous {\em frozen system} with different but fixed-in-time values of $r$. 

The RMA frozen system can have at most four stationary states (equilibria), which are derived by setting $\dot{N} = \dot{P} = 0$ in~\eqref{eq:RM1}. In addition to the {\em extinction equilibrium} 
$e_0$, which is stable for $r>0$,
there is a {\em prey-only} equilibrium $e_1(r)$, the {\em Allee equilibrium}  $e_2$, and the 
 {\em coexistence equilibrium} $e_3(r)$, whose stability depends on $r$ and other system parameters:
\begin{equation}\label{eq:RM_equilibria}
e_0=(0,0),~e_1(r)=(r/c,0),~e_2=(\mu,0),~e_3(r)=(N_3,P_3(r)).
\end{equation}
In the above, we include the argument $(r)$ when an equilibrium's position depends on $r$.
The prey and predator densities of the coexistence equilibrium $e_3(r)$ are given by:
$$
N_3 = \frac{\delta \beta}{\chi \alpha -\delta}\ge 0\;\;\mbox{and}\;\;
P_3(r) = \frac{r}{\alpha} \left(1 - \frac{c}{r} N_3\right)\frac{(\beta+N_3)(N_3-\mu)}{\nu + N_3}\ge 0.
$$

The one-parameter bifurcation diagram of the RMA frozen system in Fig.~\ref{fig:1parBD}\bluea \, reveals various bifurcations
and bistability, which are discussed in detail in Sec.~\ref{sec:basin_instability_PP}\ref{sec:bif_analysis}. Most importantly, as $r$ is increased, the coexistence equilibrium $e_3(r)$ undergoes a supercritical Hopf bifurcation $H$, which makes the equilibrium unstable and produces a stable  limit cycle $\Gamma(r)$. The cycle corresponds to {\em oscillatory coexistence of predator and prey} and is the main focus of this study. {  In the ecological literature, this Hopf bifurcation is referred to  as
the paradox of enrichment~\cite{roy2007}.}
As $r$ is increased even further, $\Gamma(r)$  
disappears in a {\em dangerous} heteroclinic bifurcation $h$ at  $r=r_h$, giving
rise to a discontinuity in the branch of coexistence attractors.
Past $r_h$,  the only attractor is the extinction equilibrium $e_0$. {  This heteroclinic bifurcation indicates where complete depletion of the predator becomes part of the cycle. Note that, in the absence of noise,  the predator remains extinct once its level reaches zero because the subspace $\{P=0\}$ is invariant. Hence the counter-intuitive transition to predator extinction at high prey growth rates.}

{  
To show that {  phase} tipping is ubiquitous in predator-prey interactions, we also consider another paradigmatic predator-prey model, the May model~\cite{May2019,strohm:2009}:
\begin{equation}\label{eq:May1}
    \begin{split} 
        \dot{N} &= r(t)\, N\left(1-\frac{c}{r}N\right)\left(\frac{N-\mu}{\nu+N}\right) - \frac{\alpha NP} {\beta+N},\\
        \dot{P} &= sP\left(1-\frac{qP}{N+\epsilon}\right). 
    \end{split}
\end{equation}
This model has the same equation for the prey population density $N$ as the RMA model, but differs in the  equation for the predator population density $P$. Specifically, 
$s$ is the low-density predator growth rate and
$\epsilon$ is introduced to allow prey extinction. In other words, this model assumes that the predator must have access to other prey which allow it to survive at a low density $\epsilon/q$ in the absence of the primary prey $N$.
The parameter $q$ approximates the minimum prey-to-predator biomass ratio that allows predator population growth, and Table~\ref{tab:parValues} in Appendix \ref{app:maymodel} contains realistic parameter values, estimated from Canada lynx and snowshoe hare data \cite{strohm:2009,tyson:2009}. 

In addition to the {\em extinction equilibrium} 
$e_0$, which is always stable,
the May frozen system has a {\em prey-only} equilibrium $e_1(r)$, an {\em Allee equilibrium}  $e_2$, and {\em two coexistence equilibria} $e_3(r)$ and $e_4(r)$, whose stability depends on the system parameters.
Further details and analysis of the May frozen model are provided in Appendix~\ref{app:maymodel}.
}

\subsection{Phase of the cycle}

\begin{figure}[h!]
    \centering 
    \includegraphics[width=1\textwidth]{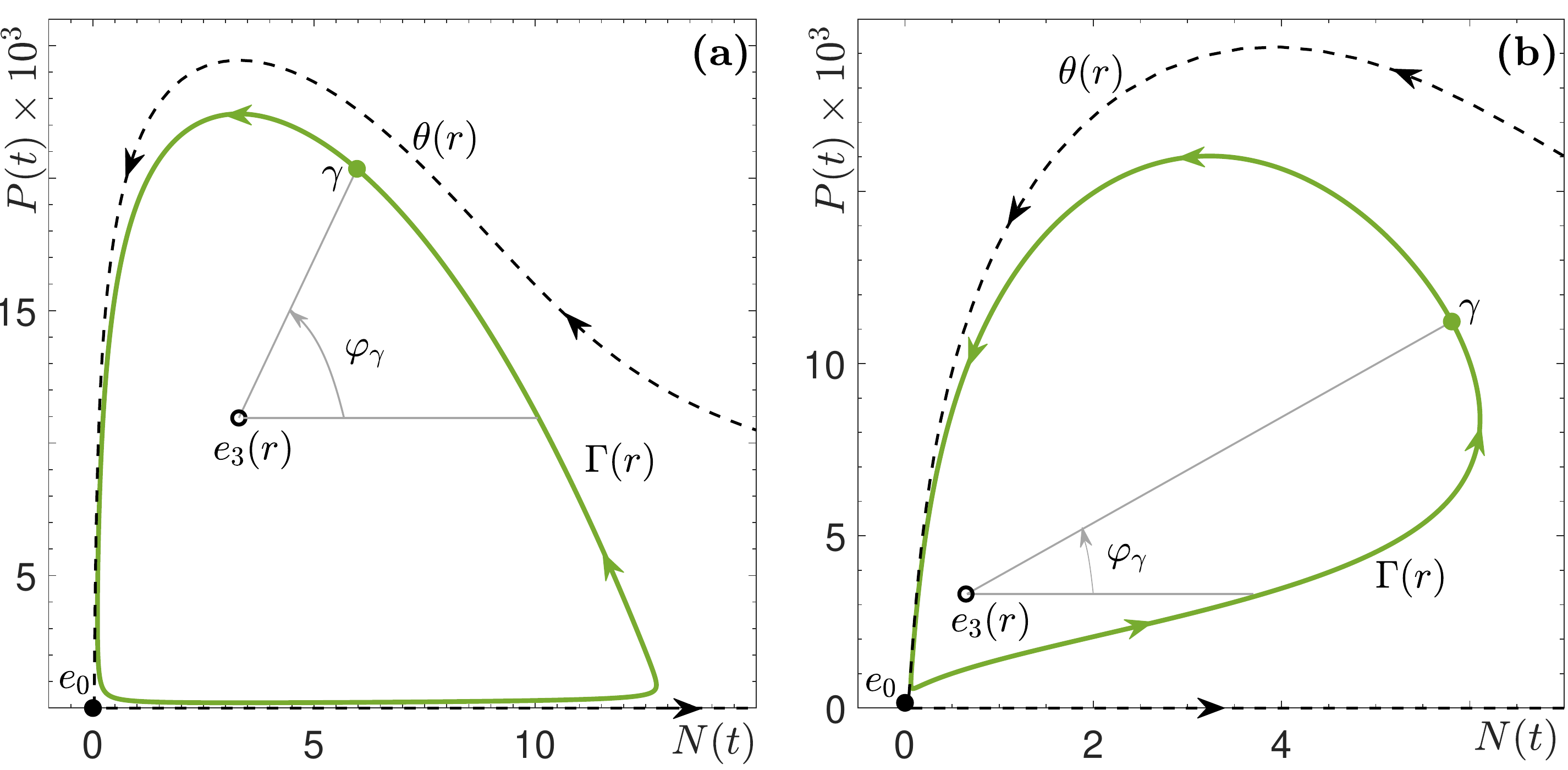}
    \caption{Phase portraits showing the (green) predator-prey limit cycles $\Gamma(r)$ together with their phases $\varphi_\gamma$ and basin boundaries $\theta(r)$ in (a) the autonomous RMA frozen model~\eqref{eq:RM1} with $r = 2.47$ and (b) the autonomous May frozen model~\eqref{eq:May1} with $r = 2$.
    The other parameter values are given in Appendix~\ref{app:maymodel}, Table~\ref{tab:parValues}.  Schematic phase portraits depicting all equilibria and invariant manifolds are shown in  Appendix~\ref{app:maymodel}, Fig.~\ref{fig:SchematicPhase}. 
    }
    \label{fig:phaseOfCycle}
\end{figure}

To depict {  phase} tipping, {  each point on the limit cycle, as well as in a neighbourhood of the cycle,  must be characterised} by its unique phase. 
In the two-dimensional phase space of the autonomous predator-prey frozen systems~\eqref{eq:RM1} and~\eqref{eq:May1}, the stable limit cycle $\Gamma(r)$
makes a simple rotation about the coexistence equilibrium $e_3(r)$. 
We take advantage of this fact and 
assign a unique {phase} $\varphi_{\gamma}\in[0,2\pi)$  to every point $\gamma=(N_\gamma,P_\gamma)$ on the limit cycle using a polar coordinate system anchored in $e_3(r) = (N_{3}(r),P_{3}(r))$:
\begin{align}
\label{eq:phase}
\varphi_{\gamma} = \tan^{-1}\left( 10^3\, \frac{P_{\gamma} - P_{3}}{N_\gamma - N_{3}} \right).
\end{align}
In other words, the phase of the cycle is the angle measured counter-clockwise from the horizontal
half line that extends from $e_3(r)$ in the direction of increasing $N$, as is shown in Fig.~\ref{fig:phaseOfCycle}. Since the values of $P(t)$ for the limit cycles in systems~\eqref{eq:RM1} and~\eqref{eq:May1} are three orders of magnitude smaller than the values of $N(t)$, the ensuing distribution of $\varphi_{\gamma}$ along
$\Gamma(r)$ is highly non-uniform. To address this issue and achieve a uniform distribution of $\varphi_{\gamma}$, we include the factor of $10^3$ in~\eqref{eq:phase}. 

{  In the problem of P-tipping, we often encounter oscillatory solutions that have not converged to the 
limit cycle $\Gamma(r)$.  Equation~\eqref{eq:phase} allows us to define the "phase" of such oscillatory solutions 
in a neighbourhood of $\Gamma(r)$.  
}

\subsection{Climate variability}
\label{sec:climvar}

Climate variability here refers to changes in the state of the climate occurring on year-to-decade time scales.  We model this process by allowing  $r(t)$, i.e., the prey birth rate and the carrying capacity of the ecosystem, to vary over time. This variation can be interpreted as climate-induced changes in resource availability or habitat quality.  Seasonal modelling studies often assume sinusoidal variation in climate parameters~\cite{picoche:2019, ghosh:2019, bessho:2010, hanski:1995}, but many key climate variables vary much more abruptly~\cite{sauve:2020}.  Since our unit of time is years, rather than months, we focus on abrupt changes in climate.\footnote{   In ecology, abrupt changes in the form of a single-switch between two values of an input parameter are called {\em press disturbances}~\cite{schoenmakers2021}.}

Guided by the the approach proposed in~\cite{Marley2020} and~\cite{wilmers2007}, we construct a piecewise constant $r(t)$ using two random processes; see Fig.~\ref{fig:cli_dist_RM_past_h}\bluea. 
First,  we assume the amplitude of $r(t)$ is a random variable with a continuous uniform probability distribution on a closed interval $[r_2,r_1]$.
Second, we assume the number of consecutive years $\ell$ during which the amplitude of $r(t)$ remains constant is a random variable with a 
discrete probability distribution known as the geometric distribution\footnote{
In the statistical literature, the above form of the geometric distribution models the number of failures in a Bernoulli trail until 
the first success occurs, where $\rho$ is the probability of success~\cite{Devroye2006}.}
\begin{align}
\label{eq:gpd}
g(\ell) = \mbox{Pr} (x = \ell) = (1-\rho)^\ell\, \rho,
\end{align}
where $\ell\in\mathbb{Z}_+$ is a positive integer and  $\rho\in(0,1)$.
{  Such an $r(t)$ can be viewed as bounded autocorrelated noise.}
Using actual climate records from four locations in the boreal and deciduous-boreal forest in North America, we choose a realistic value of $\rho = 0.2$~\cite{Marley2020}.
We say the years with constant $r(t)$ are of {high productivity}, or Type-H, if their amplitude is greater than the mean $(r_1+r_2)/2$. Otherwise we say the years are of {low productivity},  or Type-L, as indicated in Fig.~\ref{fig:cli_dist_RM_past_h}\bluea.

\section{
B-tipping vs. P-tipping in oscillatory predator-prey models
}
\label{sec:cipst}

In this section, we use the nonautonomous RMA model~\eqref{eq:RM1} to demonstrate the occurrence of P-tipping in predator-prey interactions. Furthermore, we highlight the counter-intuitive properties of P-tipping by a direct comparison with the intuitive and better understood B-tipping.

{  Note that, in the nonautonomous system,  $e_0$ remains the extinction equilibrium, but the predator-prey limit cycle $\Gamma(r)$ is replaced by (irregular) predator-prey oscillations. Nonetheless, since the system is piecewise autonomous, the dynamics and bifurcations of the autonomous frozen system help us to understand the behaviour of the nonautonomous one.}

\begin{figure}[h!]
    \centering 
    \includegraphics[width=1\textwidth]{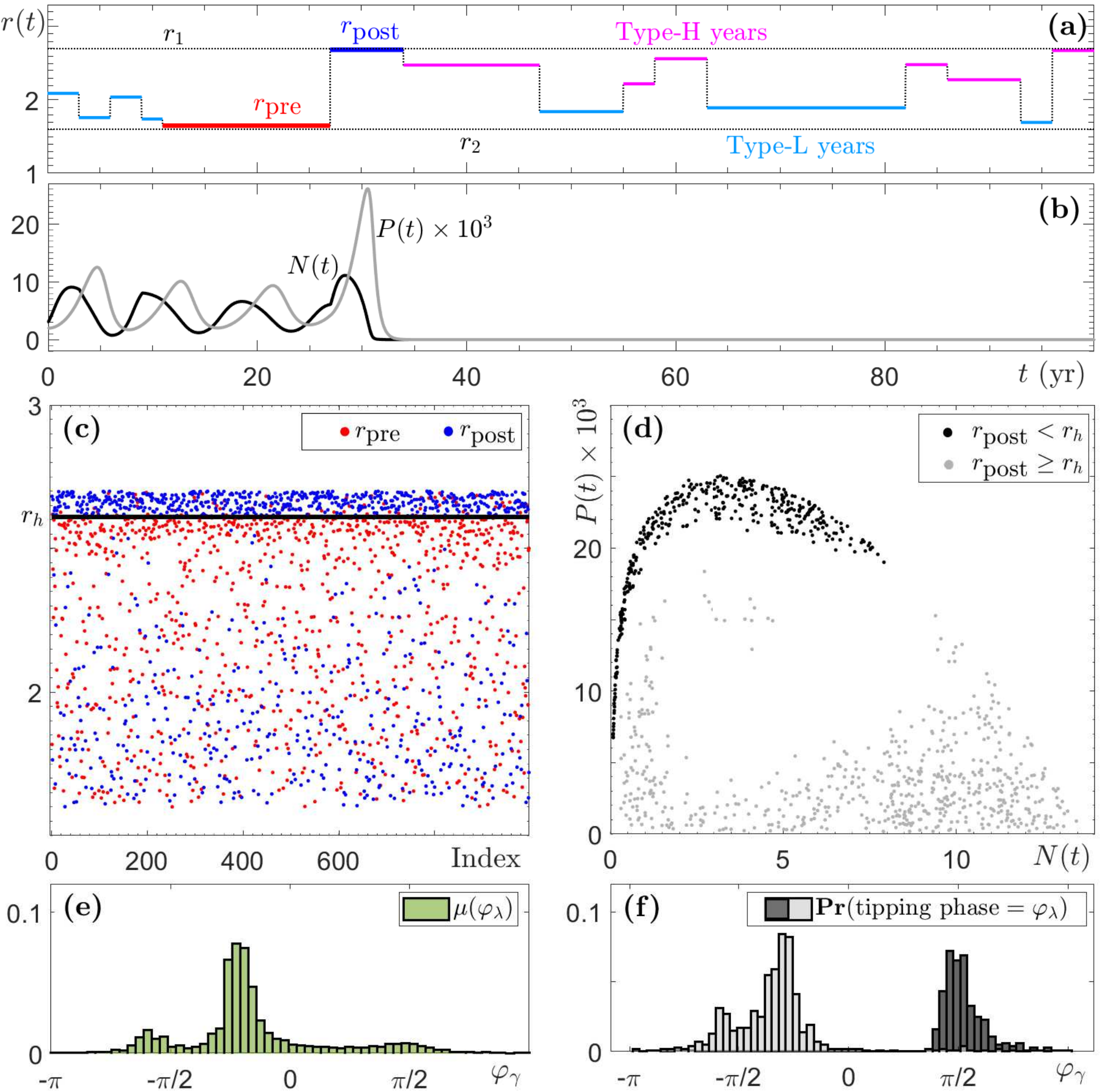}
    \caption{Results of a Monte Carlo simulation for the RMA model~\eqref{eq:RM1}, where time-varying $r(t)$ is generated using $p=0.2$ and ``Climate variability" interval $[r_2,r_1] = [1.6,2.7]$ containing $r_h$. Shown are $10^3$ numerical tipping experiments (B-tipping and P-tipping) for a fixed initial condition $(N_\textrm{0},P_\textrm{0}) = (3,0.002)$. The other parameter values are given in Appendix~\ref{app:maymodel}, Table~\ref{tab:parValues}.
    (a-b) The time profiles of $r(t)$, $N(t)$ and $P(t)$ in a single tipping experiment.
    (c) The values of $r(t)$ (red) {\em pre} and (blue) {\em post} each {  switch that causes a} tipping event.
    (d) States in the $(N,P)$ phase plane 
    {  at the time of the switch that causes a tipping event (i.e. states at the ``tipping time" defined
    in Definition~\ref{defn:Ptip}),
    }
     (gray dots) B-tipping and (black dots) P-tipping.
    (e) The invariant measure $\mu(\varphi_\lambda)$ of the limit cycle $\Gamma(r)$ parameterised by the cycle phase
    $\varphi_\lambda$. 
    (f) Probability distribution 
of tipping phases $\varphi_\lambda$  for (gray) B-tipping and (black) P-tipping.
 }
    \label{fig:cli_dist_RM_past_h}
\end{figure}

\subsection{B-tipping from predator-prey cycles}

We begin with a brief description  
of B-tipping due to the dangerous heteroclinic bifurcation $h$ of the attracting predator-prey limit cycle $\Gamma(r)$.
In the autonomous frozen system, the cycle $\Gamma(r)$ exists for the values of $r$ below $r_h$, 
and disappears in a discontinuous way when $r=r_h$; see  Fig.~\ref{fig:1parBD}\bluea. 
Thus, we expect
{  one obvious} tipping behaviour in the nonautonomous system with a time-varying $r(t)$: 

\begin{itemize}
    \item[(B1)]
    B-tipping from 
     predator-prey {  oscillations}
    to extinction $e_0$ will occur if $r(t)$ increases past the dangerous bifurcation level $r=r_h$, {  and the system
    converges to $e_0$ before switching back to $r<r_h$.}
     \item[(B2)]
     B-tipping will occur from all phases 
      of predator-prey {  oscillations}, but phases where the system spends more time are  more likely to tip. An invariant measure $\mu(\varphi_\gamma)$ of $\Gamma(r)$   
     can be obtained and normalised to 
     {  approximate}
     the probability distribution for B-tipping from a phase $\varphi_{\gamma}$ as shown in Fig.~\ref{fig:cli_dist_RM_past_h}\bluee; see {  Ref.~\cite{halmos1947} and} Appendix~\ref{App:invariantMeasures} for more details on calculating $\mu(\varphi_\gamma)$.
     \item[(B3)]
    B-tipping from 
     predator-prey {  oscillations} cannot occur when $r(t)$ decreases over time because $\Gamma(r)$  does not undergo any dangerous bifurcations upon decreasing $r$.
\end{itemize}

To illustrate properties (B1)--(B3), we perform a Monte Carlo simulation of the nonautonomous RMA system~\eqref{eq:RM1}. We restrict the variation of $r(t)$ to the closed interval $[r_2,r_1]$ containing the bifurcation point $r_h$ (see the "Climate variability" label in Fig.~\ref{fig:1parBD}\bluea, upper arrow), and perform $10^3$ numerical experiments. In each experiment, we start from a fixed initial condition  $(N_0,P_0) = (3,0.002)$ within the basin of attraction of $\Gamma(r)$, and let  $r(t)$ vary randomly as explained in Sec.~\ref{sec:ppm}\ref{sec:climvar}.
We allow the system to continue until tipping from 
{  predator-prey oscillations} to extinction occurs (Fig.~\ref{fig:cli_dist_RM_past_h}\blueb) due to a step change in $r(t)$ from 
 $r_\textrm{pre}$ to $r_\textrm{post}$ 
 (Fig.~\ref{fig:cli_dist_RM_past_h}\bluea). We then record the values of  $r_{\textrm{pre}}$ in red 
 and the values of $r_{\textrm{post}}$ in blue in 
 Fig.~\ref{fig:cli_dist_RM_past_h}\bluec, the state 
 in the $(N,P)$ phase space 
 {  when the switch from $r_{\textrm{pre}}$ to $r_{\textrm{post}}$} occurs in
 Fig.~\ref{fig:cli_dist_RM_past_h}\blued, 
 and the corresponding phase of 
 {  this state} to produce the tipping-phase histograms in 
 Fig.~\ref{fig:cli_dist_RM_past_h}\bluef.
B-tipping is identified as the blue dots above $r=r_{h}$ in 
Fig.~\ref{fig:cli_dist_RM_past_h}\bluec, meaning that
transitions to extinction occur when $r(t)$ changes from $r_\textrm{pre}<r_h$ 
to $r_\textrm{post} > r_h$ in agreement with (B1)
and (B3). The tipping phases corresponding to grey dots in Fig.~\ref{fig:cli_dist_RM_past_h}\blued, 
and the ensuing grey histogram in Fig.~\ref{fig:cli_dist_RM_past_h}\bluef,  correlate 
almost perfectly with the green invariant measure $\mu(\varphi_\gamma)$ 
of $\Gamma(r)$  in Fig.~\ref{fig:cli_dist_RM_past_h}\bluee, in agreement with (B2).

\subsection{P-tipping from predator-prey cycles}

The most striking result of the simulation is that B-tipping is not the only tipping mechanism at play. It turns out that there are other, unexpected and counter-intuitive tipping transitions. These transitions indicate a new tipping mechanism, whose dynamical properties are in stark contrast to B-tipping:
\begin{itemize}
    \item[(P1)]
    Tipping from the predator-prey 
    {  oscillations} to extiction occurs when $r(t)$ decreases and does not cross any dangerous bifurcations of $\Gamma(r)$, which is in contrast to (B1) and (B3). This is evidenced in 
    Fig.~\ref{fig:cli_dist_RM_past_h}\bluec~by
    the blue dots below $r=r_{h}$ depicting transitions to extinction when $r(t)$ changes from $r_\textrm{pre} < r_h$ to
    $r_\textrm{post} < r_\textrm{pre}$.
     \item[(P2)]
     Tipping occurs {\em only from  certain phases} of 
      predator-prey {  oscillations}, which is in contrast to (B2). This is evidenced by the  black dots in Fig.~\ref{fig:cli_dist_RM_past_h}\blued, and the ensuing black tipping-phase histogram in Fig.~\ref{fig:cli_dist_RM_past_h}\bluef.
     \item[(P3)]
    The tipping phases do not correlate at all with the  invariant measure $\mu(\varphi_\gamma)$ of $\Gamma(r)$ shown in Fig.~\ref{fig:cli_dist_RM_past_h}\bluee. This is evidenced by a comparison 
    with the black histogram in Fig.~\ref{fig:cli_dist_RM_past_h}\bluef.
\end{itemize}  
Since the unexpected tipping transitions occur only from certain phases of 
{  predator-prey oscillations},
we refer to this phenomenon as {\em {  phase} tipping} or {\em P-tipping}. 

Although P-tipping is less understood than B-tipping, it is ubiquitous and possibly even more relevant for predator-prey interactions. In Fig.~\ref{fig:cli_dist_RM} we restrict  climate variability in the RMA model~\eqref{eq:RM1} to a closed interval $[r_2,r_1]$ that does not contain $r_h$. In other words, we set $r_1 < r_h$. Since the time-varying input $r(t)$ cannot cross the dangerous heteroclinic bifurcation, all tipping transitions are P-tipping events. 
Furthermore, owing to the absence of dangerous bifurcations of $\Gamma(r)$ in the May model~\eqref{eq:May1} in Fig.~\ref{fig:1parBD}\blueb, P-tipping from 
{  predator-prey oscillations} to extinction $e_0$ is the only tipping mechanism in Fig.~\ref{fig:cli_dist_May}. Note that P-tipping is more likely to occur in the May model, as evidenced by shorter tipping times; compare Figs.~\ref{fig:cli_dist_RM}\bluec\, and \ref{fig:cli_dist_May}\bluec.

The numerical experiments in Figs.~\ref{fig:cli_dist_RM}~and~\ref{fig:cli_dist_May} serve as motivating examples for the development of a general mathematical framework for P-tipping 
in Section~\ref{sec:basin_instability}.

\begin{figure}[t]
    \centering 
    \includegraphics[width=1\textwidth]{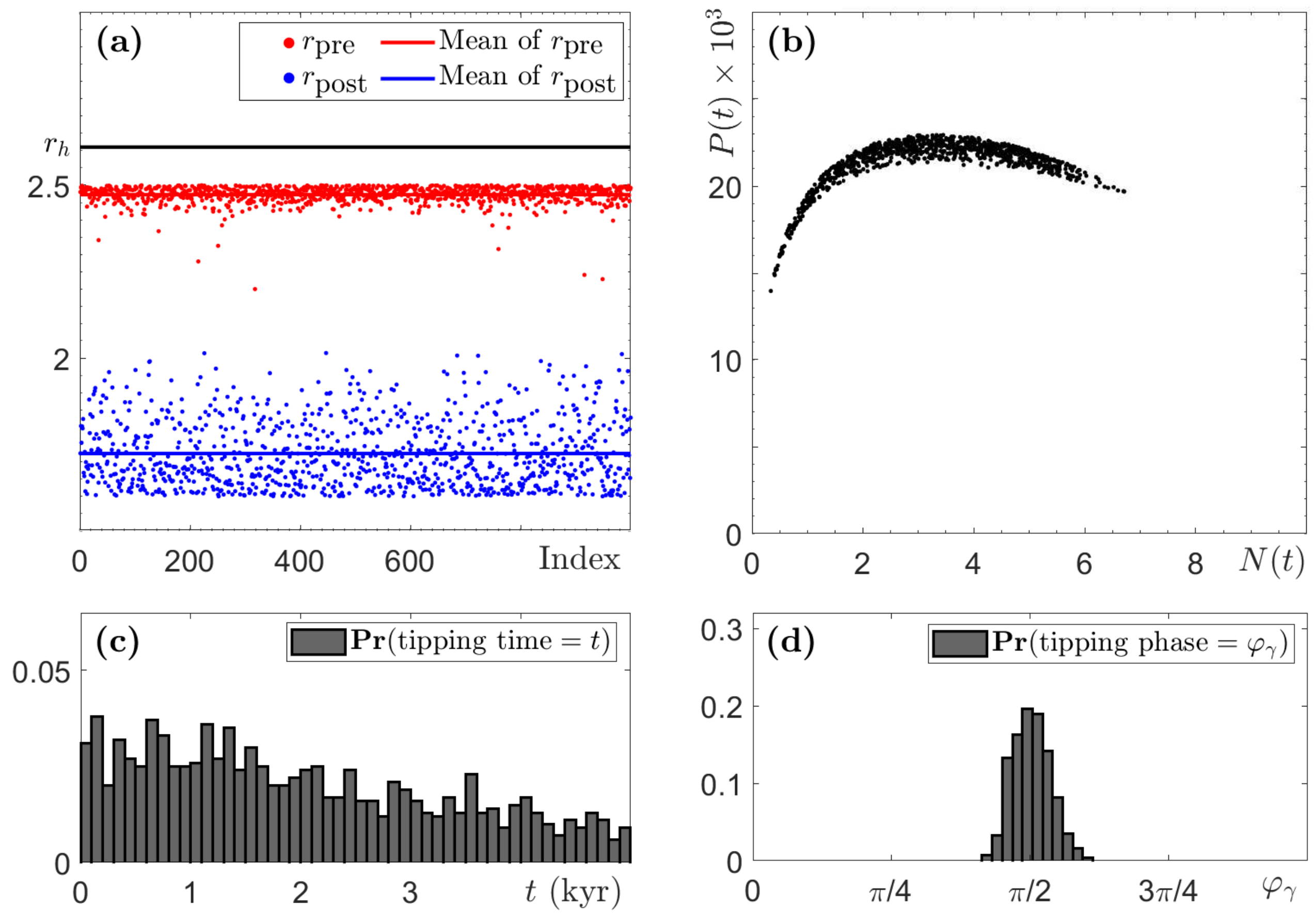}
    \caption{ 
(a-b) and (d) The same as in Fig.~\ref{fig:cli_dist_RM_past_h} except for 
$r(t)$ taking values from a different ``Climate variability" interval $[r_2,r_1] = [1.6,2.5]$ that does not contain $r_h$. As a result, each of the 1000 tipping events is P-tipping. (c) The probability distribution of tipping at time $t$. The other parameter values are given in Appendix~\ref{app:maymodel}, Table~\ref{tab:parValues}.
}
    \label{fig:cli_dist_RM}
\end{figure}

\subsection{The Allee threshold: Intuitive explanation of P-tipping}

{ 
Intuitively, P-tipping from predator-prey oscillations to extinction in the nonautonomous system can  be understood 
in terms of an {\em Allee threshold} $\theta(r)$ in the autonomous frozen system,
separating trajectories that lead to extinction 
from those that approach the predator-prey cycle (see Figs.~\ref{fig:phaseOfCycle} and~\ref{fig:SchematicPhase}), and 
how a given drop in prey resources $r(t)$ affects different phases near the predator-prey cycle via the changing Allee threshold.

The shape and position of both the Allee threshold 
$\theta(r)$ and the predator-prey cycle $\Gamma(r)$ 
are modified by a drop  in  prey resources $r(t)$.
The strongest impact is expected when the drop coincides 
with the region of the fastest decline in prey $N(t)$ and a large 
predator population $P(t)$. These situations occur near the part of the cycle within a range of phases around $\varphi_{\gamma}=\pi/2$, which is close to $\theta(r)$. 
There, the drop speeds up the prey decline, which, in conjunction with high predation 
pressure, creates perfect conditions for the ecosystem to move away from the modified cycle, cross the even closer modified Allee threshold, and move towards extinction.
Indeed, Figs.~\ref{fig:cli_dist_RM} and~\ref{fig:cli_dist_May} show 
that P-tipping occurs from a range of phases around $\varphi_{\gamma}=\pi/2$. 
The ecosystem response is very different if the same drop in prey 
resources coincides with the region of the fastest growth of 
prey $N(t)$ and a small predator population $P(t)$. These situations occur near a different part of the cycle,
within a range of phases around $\varphi_{\gamma}=-\pi/2$, which is away from $\theta(r)$.
There, the drop slows or even reverses the prey growth, but low predation pressure prevents the ecosystem from crossing the distant Allee threshold and helps it adapt 
to the modified cycle instead. Hence the observed phase sensitivity of tipping from predator-prey oscillations to extinction in the nonautonomous systems.
}

\begin{figure}[t]
    \centering 
    \includegraphics[width=1\textwidth]{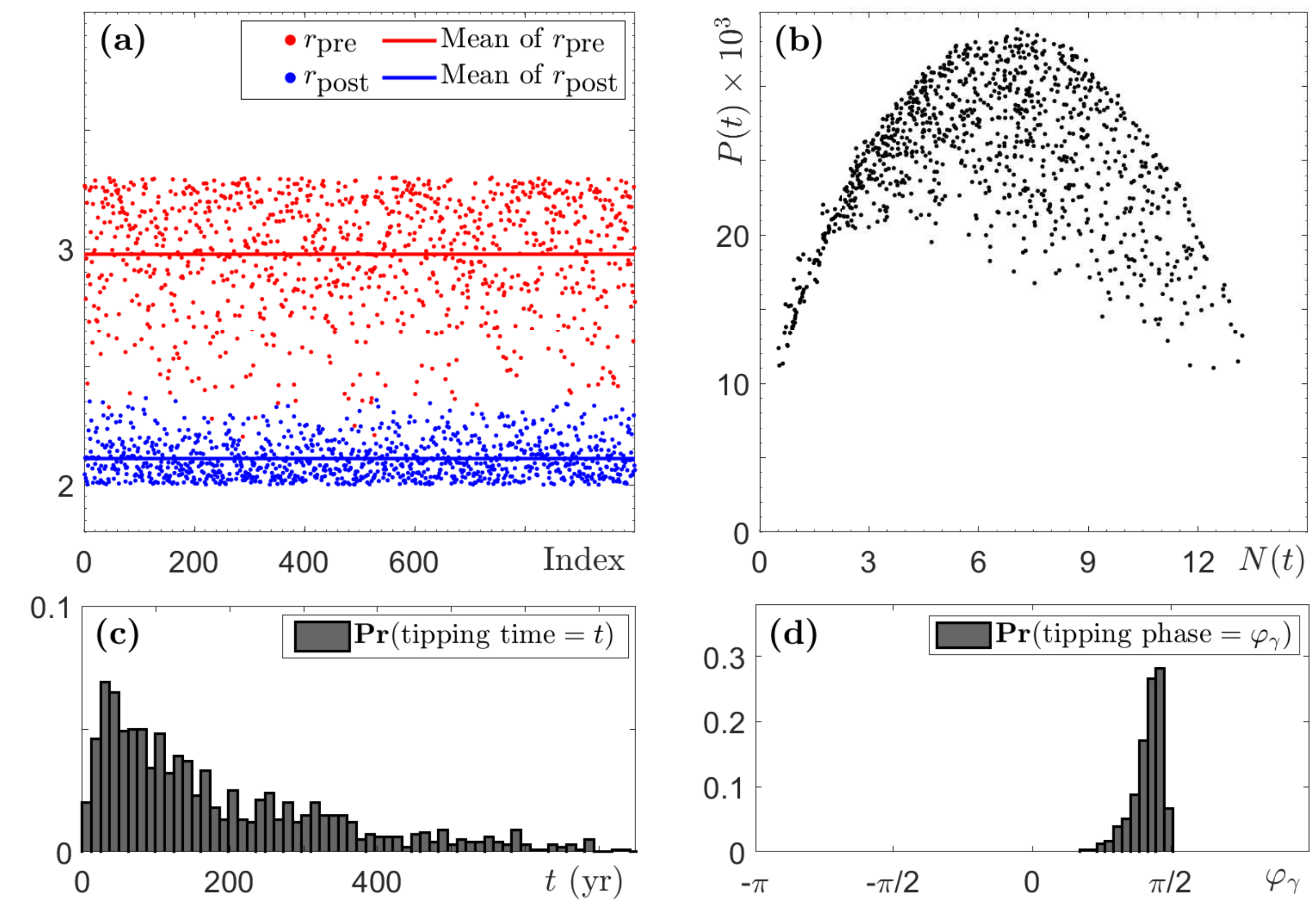}
    \caption{
    (a-d) The same as in Fig.~\ref{fig:cli_dist_RM} but for the
    May frozen model~\eqref{eq:May1} with $r(t)$ taking values from the ``Climate variability" interval $[r_2,r_1] = [2,3.3]$. Each of the 1000 tipping events is an instance of P-tipping. The other parameter values are given in Appendix~\ref{app:maymodel}, Table~\ref{tab:parValues}.
    }
    \label{fig:cli_dist_May}
\end{figure}

\section{A geometric framework for  P-tipping: Partial basin instability}
\label{sec:basin_instability}
Motivated by the numerical experiments in Figs.~\ref{fig:cli_dist_RM}~and~\ref{fig:cli_dist_May},
and the fact that P-tipping is not captured by classical bifurcation theory, the aim of this section is to provide mathematical tools for analysis of P-tipping.
Specifically, we develop a simple geometric framework that uses global properties of the autonomous frozen system to study P-tipping from attracting limit cycles {  and their neighbourhoods} in the nonautonomous system.
The key concept is {\em basin instability}\footnote{  
Not to be confused with the notion of "basin stability"
introduced as a measure related to the volume of the basin of attraction \cite{menck2013}. 
}. This concept was first introduced in~\cite[Section~5.2]{o2019}
to 
study irreversible R-tipping from base states that are stationary (attracting equilibria) for fixed-in-time external inputs. Here, we extend this concept to base states that are attracting limit cycles for fixed-in-time external  inputs. Our framework will allow us to  give easily testable criteria for the occurrence of P-tipping from limit cycles in general, and explain the counter-intuitive collapses to extinction in the predator-prey systems from Sec.~\ref{sec:cipst}.

To define basin instability  {  and P-tipping for limit cycles in general terms,
we consider an $n$-dimensional nonautonomous system
\begin{equation}
    \label{eq:general}
    \dot{x} = f(x,p(t)),
\end{equation}
with $x\in\mathbb{R}^n$, and a piecewise constant external input $p(t)$ that can be single-switch or multi-switch.
When it is important to highlight the dependence of multi-switch inputs on $\rho$ \big(see Eq.~\eqref{eq:gpd}\big),  we write $p_{\rho}(t)$ instead of $p(t)$.
Note that the RMA~\eqref{eq:RM1} and May~\eqref{eq:May1} models with $r(t)$ from Section~\ref{sec:ppm}\ref{sec:climvar}
are examples of~\eqref{eq:general}. Furthermore, we write
$$
x(t,x_0;t_0)
$$
to denote a solution to the nonautonomous system~\eqref{eq:general} 
at time $t$ started from $x_0$ at initial time $t_0$.
We also consider the corresponding autonomous frozen system with different but fixed-in-time values of the external input $p$,
and write
$$
x(t,x_0;p)
$$
to denote a solution to the autonomous frozen system 
at time $t$ started from $x_0$ for a fixed $p$.
}

\subsection{Ingredients for defining basin instability}

One key ingredient of a basin instability definition is the
{\em base attractor} in the autonomous frozen system, denoted $\Gamma(p)$, whose shape and position in the phase space vary  with the input parameter(s) $p$.
The second key ingredient is the {\em basin of attraction of the base attractor}, denoted $B(\Gamma,p)$, whose shape and extent may also vary with the input parameter(s) $p$.
For non-stationary attractors $\Gamma(p)$, we work
with the distance{ \footnote{  The distance between $x(t,x_0;p)$  and $\Gamma(p)$ is
$d\left[x(t,x_0;p),\Gamma(p)\right] = \inf_{\gamma\in\Gamma(p)} \Vert x(t,x_0;p) - \gamma \Vert$.}
between a solution $x(t,x_0;p)$ 
and the set $\Gamma(p)$, and write
$$
x(t,x_0,p)\to\Gamma(p)
\quad\mbox{as}\quad t \to +\infty,
$$
when this distance tends to zero as $t \to +\infty$.}
We define $B(\Gamma,p)$ as the open set of all $x_0$ whose trajectories converge to $\Gamma(p)$ forward in time:
$$
B(\Gamma,p) = \left\{x_0\;:\;
{  x(t,x_0,p)\to\Gamma(p)}\;\;\textrm{as}\;\; t\to +\infty
\right\}.
$$
We often refer to {\em the closure of the basin of attraction} of $\Gamma(p)$, denoted $\overline{B(\Gamma,p)}$, which comprises $B(\Gamma,p)$ and its boundary, and to the {\em basin boundary} of $\Gamma(p)$, which is  given by the set difference $\overline{B(\Gamma,p)}\setminus B(\Gamma,p)$.
Additionally, we assume that either all or part of the basin boundary of $\Gamma(p)$ is a
basin boundary of at least two attractors. This property, in turn, requires that
the autonomous frozen system is at least {\em bistable}, meaning that it has at least one more attractor, other that $\Gamma(p)$, for the same values of the input parameter(s) $p$.

The third key ingredient is a 
{  {\em parameter path} $\Delta_p$, which we define as  a connected set of all possible values of the external input $p(t)$.}
It is important that $\Delta_p$ does not cross any classical autonomous bifurcations of the base attractor $\Gamma(p)$.

\subsection{Definitions of basin instability for limit cycles}

In short, {\em basin instability} of the base attractor on a parameter path describes the position of the base attractor at some point on the path relative to the position of its basin of attraction at other points on the path. 
Here, we define this concept rigorously for attracting limit cycles setwise.
\begin{definition} 
\label{def:partialBI_set}
{  Consider a parameter path $\Delta_p$. Suppose the frozen system has a family of hyperbolic attracting limit cycles $\Gamma(p)$ that vary $C^1$-smoothly with $p\in\Delta_p$.}
We say $\Gamma(p)$ is {\em basin unstable} on a path $\Delta_p$ if there are two points on the path, $p_1,p_2 \in \Delta_p$, such that the limit cycle $\Gamma(p_1)$ is not contained in the basin of attraction of $\Gamma(p_2)$:
  \begin{equation}
  \label{eq:BI_set}
\textrm{There exist}\;\; p_1,p_2 \in \Delta_p \;\; \textrm{ such that}\;\;\Gamma(p_1) \not\subset {B(\Gamma,p_2)}.  
    \end{equation}

Furthermore, we distinguish  two {\em observable (or typical)} cases of basin instability:
\begin{itemize}
    \item[(i)] 
    We say $\Gamma(p)$ is {\em partially basin unstable} on a path $\Delta_p$ if there are two points on the path, $p_1$ and $p_2 \in \Delta_p$, such that the limit cycle $\Gamma(p_1)$ is not fully contained in the closure of the basin of attraction of $\Gamma(p_2)$, and, for every two points on the path, $p_3$ and $p_4 \in \Delta_p$,  $\Gamma(p_3)$ has a non-empty intersection with the basin of attraction of $\Gamma(p_4)$:
    \begin{equation}
    \begin{split}
    & \textrm{There exist}\;\; p_1,p_2 \in \Delta_p \;\; \textrm{ such that}\;\;\Gamma(p_1) \not\subset \overline{B(\Gamma,p_2)}, \;\; \textrm{and}\\
  &\Gamma(p_3) \bigcap {B(\Gamma,p_4)} \neq \emptyset\;\;\textrm{for every}\;\; p_3, p_4\in\Delta_p.
  \end{split}
  \end{equation}

    \item[(ii)] We say $\Gamma(p)$ is {\em totally basin unstable} on a path $\Delta_p$ if there are (at least) two points on the path,
     $p_1$ and $p_2 \in \Delta_p$, such that $\Gamma(p_1)$ lies outside the closure 
     of the basin of attraction of $\Gamma(p_2)$:
    \begin{align}
\textrm{There exist}\;\; p_1,p_2 \in \Delta_p \;\; \textrm{ such that}\;\; \Gamma(p_1) \bigcap \overline{B(\Gamma,p_2)} = \emptyset.
    \end{align}
    \end{itemize}
\end{definition}
\begin{remark}
\label{rem:partialBI_set}
Additionally, there are two {\em indiscernible (or special)} cases of basin instability for limit cycles. They cannot be easily distinguished by observation from total basin instability, or from lack of basin instability. 
However, the indiscernible cases are necessary (although not sufficient) for the onset of partial basin instability and for transitions between partial and total basin instability.
    \begin{itemize}
    \item[(iii)] 
     We say $\Gamma(p)$ is {\em marginally basin unstable} on a path $\Delta_p$ if, in addition to \eqref{eq:BI_set}, for every two points on the path, $p_3$ and $p_4 \in \Delta_p$, the limit cycle $\Gamma(p_3)$ is contained in $\overline{B(\Gamma,p_4)}$:
    \begin{align}
     \Gamma(p_3) \subset \overline{B(\Gamma,p_4)}
    \;\;\textrm{for every}\;\; p_3, p_4\in\Delta_p.
    \end{align}
    The special case of marginal basin instability separates the typical cases of ``no basin instability" and ``partial basin instability". Furthermore, it is related to ``invisible R-tipping'' and to transitions between ``tracking'' and ``partial R-tipping''  identified in~\cite{alkhayuon2018}. 
    \item[(iv)] 
    We say $\Gamma(p)$ is {\em almost totally basin unstable} on a path $\Delta_p$ if 
    there are (at least) two points on the path, $p_1$ and $p_2 \in \Delta_p$, such that $\Gamma(p_1)$ does not intersect $B(\Gamma,p_2)$, and,
    for every two points on the path, $p_3$ and $p_4 \in \Delta_p$, the limit cycle $\Gamma(p_3)$ intersects $\overline{B(\Gamma,p_4)}$:
    \begin{equation}
    \label{eq:almostTBU}
    \begin{split}
    & \textrm{There exist}\;\; p_1, p_2\in\Delta_p \;\;\textrm{such that}\;\; \Gamma(p_1) \bigcap B(\Gamma,p_2) = \emptyset, \;\;\textrm{and}\\
    & \Gamma(p_3) \bigcap \overline{B(\Gamma,p_4)} \ne \emptyset  \;\;\textrm{for every}\;\; p_3, p_4\in\Delta_p. 
    \end{split}
    \end{equation}
    The special case of almost total basin instability separates  the typical cases of ``partial basin instability" and ``total basin instability". Furthermore,
    it is related to transitions between ``partial R-tipping" and "total R-tipping" described in~\cite{alkhayuon2018}. 
    \end{itemize}
Note that, for equilibrium base states, ``partial basin instability" is not defined, whereas ``marginal basin instability" and ``almost total basin instability" become the same condition.
\end{remark}

Guided by the approach proposed in~\cite{o2019}, we would like to
augment the classical autonomous bifurcation diagrams for the
autonomous frozen system  with information about (partial) basin instability of the base attractor $\Gamma(p)$. The aim is to reveal {  nonautonomous instabilities} that {  cannot be explained by classical autonomous bifurcations of the frozen system.}
To illustrate  basin instability of $\Gamma(p)$ in the bifurcation diagram  of the autonomous frozen system, we define the {\em region of  basin instability} of $\Gamma(p)$ in the space of the input parameters as follows:
\begin{definition} 
\label{def:BI}
In the autonomous frozen system, consider 
{  a $C^1$-smooth family of hyperbolic attracting limit cycles $\Gamma(p)$, and denote it with $G$. 
For a fixed $p=p_1$, we define a {\em region of basin instability} of $\Gamma(p_1)\in G$
as a set of all points $p_2$ in the space of the input parameters $p$, such that $\Gamma(p_1)$ is not contained 
 in the basin of attraction of $\Gamma(p_2)\in G$:}
\begin{align}
\label{eq:BIR}
BI(\Gamma,p_1) := 
\left\{
p_2\;:\; \Gamma(p_1) \not\subset 
{B(\Gamma,p_2)}
{  \;\;\mbox{and}\;\;
\Gamma(p_2)\in G}
\right\}.
\end{align}
\end{definition}

\subsection{Partial basin Instability and P-tipping}
\label{sec:pbipt}
{ 
Thus far, we have worked with a loosely defined concept of P-tipping. In this section, we give rigorous definitions of P-tipping for single-switch and multi-switch $p(t)$,
show that partial basin instability of $\Gamma(p)$ for a single-switch $p(t)$ is necessary and sufficient for the occurrence of P-tipping from $\Gamma(p)$, and discuss the applicability of this result to multi-switch $p(t)$. 
}
{ 
\begin{definition} 
\label{defn:Ptip}
Consider a nonautonomous system~\eqref{eq:general} with a piecewise constant input $p(t)$ on a parameter path $\Delta_p$.
Suppose the autonomous frozen system has a 
 family of hyperbolic attracting limit cycles $\Gamma(p)$ that vary $C^1$-smoothly with $p\in\Delta_p$.
\begin{itemize}
    \item[(i)] 
    Suppose $p(t)$ is a single-switch that changes from $p_1\in\Delta_p$ to $p_2\in\Delta_p$ at time $t=t_1$. Suppose also the system is on $\Gamma(p_1)$ at $t=t_1$. We then say that system~\eqref{eq:general} undergoes {\em irreversible P-tipping} from $\Gamma(p_1)$ if there are $x_{a}, x_{b}\in\Gamma(p_1)$, such that 
    $$
    x(t,x_a;p_2)\to\Gamma(p_2)
    \quad\mbox{as}\quad t\to +\infty
    \quad\mbox{and}\quad
    x(t,x_b;p_2)\notin \overline{B(\Gamma,p_2)}
    \quad\mbox{for all}\quad
    t > t_1. 
    $$
    We call $\varphi_{x_b}$ a {\em tipping  phase} associated with each such $x_b$.
    \item[(ii)] 
    Suppose $p_{\rho}(t)$ is multi-switch with a fixed $\rho$. 
    If $x(t_,x_0;t_0)$ leaves the basin of attraction $B(\Gamma,p_{\rho}(t))$ for good, we use $t_1$ to denote the smallest switching time such that
    $$
    x(t,x_0;t_0)\notin \overline{B(\Gamma,p_{\rho}(t))}
    \quad\mbox{for all}\quad
    t > t_1.
    $$
    We use $x_b = x(t_1,x_0;t_0)$ to denote the corresponding state, and $\varphi_{x_b}$ to denote the corresponding {\em tipping phase}.
    We then say that system~\eqref{eq:general} undergoes {\em irreversible P-tipping} if, for 
    some initial condition $x_0\in B(\Gamma,p_{\rho}(t_0))$ and all realisations of $p_{\rho}(t)$, there are  tipping phases $\varphi_{x_b}$ and also a non-zero Lebesgue measure subset of $[0,2\pi)$ that does not contain any tipping phases $\varphi_{x_b}$.
\end{itemize}
We call $t_1$ the {\em tipping time}
\end{definition}
}
\begin{remark}
\label{rem:various}
{  It should be possible to extend Definition~\ref{defn:Ptip} to:}
\begin{itemize}
{ 
    \item [(i)]
     Smoothly varying $p(t)$, for which P-tipping from $\Gamma(p)$ is expected to depend on the rate of change of $p(t)$~\cite{alkhayuon2018,longo2020}. 
     \item [(ii)]
     Non-periodic attractors such as tori or chaotic attractors, which may require an alternative phase definition. We return to this point in Section~\ref{sec:conclusions}.
     }
\end{itemize}
\end{remark}

{ 
In general, the occurrence of P-tipping depends on the initial state, the properties of the external input $p(t)$, and the topological structure of the phase space. We now show that partial basin instability of $\Gamma(p)$ for a single-switch $p(t)$ is necessary and sufficient for the occurrence of P-tipping from $\Gamma(p)$.
}
\begin{proposition} 
\label{Prop:P-tipping}
{ 
Consider a nonautonomous system~\eqref{eq:general} 
and a parameter path $\Delta_p$.
Suppose the frozen system has  a 
 family of hyperbolic attracting limit cycles $\Gamma(p)$ that vary $C^1$-smoothly with $p\in\Delta_p$, and $\Gamma(p)$ is partially basin unstable on $\Delta_p$.}
Then, for all $p_1$ and $p_2\in\Delta_p$, {  a single-switch parameter change} from $p_1$ to $p_2$ gives {\em irreversible P-tipping} from $\Gamma(p_1)$ if and only if $\Gamma(p_1) \not\subset \overline{B(\Gamma,p_2)}$.
\end{proposition}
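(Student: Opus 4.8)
The plan is to reduce both implications to a single topological fact about the $p_2$-frozen system, which I will call the \emph{dichotomy lemma}: a forward orbit $t\mapsto x(t,x_0;p_2)$ either converges to $\Gamma(p_2)$ (i.e.\ lies in $B(\Gamma,p_2)$), or lies entirely on the basin boundary, or lies entirely in the open complement of $\overline{B(\Gamma,p_2)}$; in particular, a forward orbit can neither enter $\overline{B(\Gamma,p_2)}$ from its complement, nor leave $\overline{B(\Gamma,p_2)}$ once inside it. Granting this, the ``only if'' direction is immediate and the ``if'' direction follows by combining it with the second clause of Definition~\ref{def:partialBI_set}(i). Throughout I will use that $\Gamma(p_2)$ is a hyperbolic attracting limit cycle, so $B(\Gamma,p_2)$ is open and every one of its points has a complete, bounded forward orbit.

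To prove the dichotomy lemma I would argue as follows. $B(\Gamma,p_2)$ is forward invariant (a tail of a convergent orbit still converges) and also invariant along any backward orbit that exists (a forward orbit passing through a point of $B(\Gamma,p_2)$ still converges). Since the time-$t$ frozen flow is a local homeomorphism onto its image and maps $B(\Gamma,p_2)$ into itself, it maps $\overline{B(\Gamma,p_2)}$ into $\overline{B(\Gamma,p_2)}$; hence $\overline{B(\Gamma,p_2)}$ is forward invariant, and so is its boundary. The only non-routine point is forward invariance of the complement of $\overline{B(\Gamma,p_2)}$: if $x_b\notin\overline{B(\Gamma,p_2)}$ but $x(t_2,x_b;p_2)\in\overline{B(\Gamma,p_2)}$ for some $t_2>t_1$, then either $x(t_2,x_b;p_2)\in B(\Gamma,p_2)$ — which forces $x_b$ onto a trajectory converging to $\Gamma(p_2)$, so $x_b\in B(\Gamma,p_2)$, contradiction — or $x(t_2,x_b;p_2)$ lies on the basin boundary, in which case I approximate it by points of $B(\Gamma,p_2)$ and flow them backward for time $t_2-t_1$; this is legitimate on a neighbourhood of $x(t_2,x_b;p_2)$ by continuous dependence on initial conditions (the backward orbit reaches $x_b$), and it exhibits $x_b$ as a limit of points of $B(\Gamma,p_2)$, again a contradiction. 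This step is exactly where hyperbolicity and continuous dependence are used, and I expect it to be the main obstacle to a fully rigorous write-up.

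For the ``only if'' direction, suppose a single switch from $p_1$ to $p_2$ gives irreversible P-tipping from $\Gamma(p_1)$. Definition~\ref{defn:Ptip}(i) supplies $x_b\in\Gamma(p_1)$ with $x(t,x_b;p_2)\notin\overline{B(\Gamma,p_2)}$ for all $t>t_1$; fixing one such $t_2>t_1$ and using forward invariance of $\overline{B(\Gamma,p_2)}$ from the dichotomy lemma, $x_b\in\overline{B(\Gamma,p_2)}$ would force $x(t_2,x_b;p_2)\in\overline{B(\Gamma,p_2)}$, a contradiction. Hence $x_b\notin\overline{B(\Gamma,p_2)}$, so $\Gamma(p_1)\not\subset\overline{B(\Gamma,p_2)}$. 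Note this direction uses only Definition~\ref{defn:Ptip} and the lemma, not partial basin instability.

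For the ``if'' direction, assume $\Gamma(p_1)\not\subset\overline{B(\Gamma,p_2)}$ and pick $x_b\in\Gamma(p_1)\setminus\overline{B(\Gamma,p_2)}$; forward invariance of the complement from the dichotomy lemma gives $x(t,x_b;p_2)\notin\overline{B(\Gamma,p_2)}$ for all $t>t_1$, so $x_b$ is a genuine tipping state with tipping phase $\varphi_{x_b}$. It remains to produce a non-tipping state $x_a$: here I invoke partial basin instability of $\Gamma(p)$ on $\Delta_p$, whose second clause (Definition~\ref{def:partialBI_set}(i)) applied with $p_3=p_1$, $p_4=p_2$ gives $\Gamma(p_1)\cap B(\Gamma,p_2)\neq\emptyset$; any $x_a$ in this intersection satisfies $x(t,x_a;p_2)\to\Gamma(p_2)$ as $t\to+\infty$. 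Thus all requirements of Definition~\ref{defn:Ptip}(i) are met, and irreversible P-tipping from $\Gamma(p_1)$ occurs. (Partial basin instability is precisely what rules out the totally-basin-unstable alternative in which no such $x_a$ exists, and it also guarantees the equivalence is non-vacuous, i.e.\ that some admissible $p_2$ satisfies $\Gamma(p_1)\not\subset\overline{B(\Gamma,p_2)}$.)
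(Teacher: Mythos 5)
Your proof is correct and follows essentially the same route as the paper: a tipping state is any $x_b\in\Gamma(p_1)\setminus\overline{B(\Gamma,p_2)}$, a non-tipping state comes from the second clause of partial basin instability giving $\Gamma(p_1)\cap B(\Gamma,p_2)\neq\emptyset$, and the converse is immediate. The only difference is that you spell out, via your dichotomy lemma, the forward-invariance of $B(\Gamma,p_2)$, of its closure, and of the complement of the closure — facts the paper's proof treats as following directly from the definition of the basin of attraction.
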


\begin{proof}
{  A single-switch} parameter change  from $p_1$ to $p_2$ at time $t=t_0$ reduces the
problem to an autonomous initial value problem with initial condition
$x_0=x(t_0)$ and fixed
$p=p_2$. It follows from the definition of basin of attraction that only solutions $x(t,x_0;p_2)$ started from 
$x_0 \in B(\Gamma,p_2)$ are attracted to the limit cycle $\Gamma(p_2)$. 
Thus, if  $\Gamma(p)$ is partially basin unstable on $\Delta_p$ and  $\Gamma(p_1) \not\subset \overline{B(\Gamma,p_2)}$, then there will be  $\gamma \in \Gamma(p_1)\setminus\overline{B(\Gamma,p_2)}$, that give irreversible tipping,
and  $\gamma \in \Gamma(p_1) \cap B(\Gamma,p_2)$, that give no tipping.
Conversely, if there is irreversible P-tipping from $\Gamma(p_1)$, then there must be $\gamma \in \Gamma(p_1)\setminus\overline{B(\Gamma,p_2)}$, which implies
$\Gamma(p_1) \not\subset \overline{B(\Gamma,p_2)}$.
\end{proof}
{  
This rigorous statement no longer holds for multi-switch piecewise constant inputs $p_{\rho}(t)$. The reason is that trajectories are no longer guaranteed to converge to the limit cycle $\Gamma(p)$,  or to the alternative attractor of the frozen system, 
if the time interval between consecutive switches is short compared to the time of convergence. Additionally, trajectories started in the basin of attraction of $\Gamma(p)$ may move away from $\Gamma(p)$ for finite time.
These differences allow for two dynamical scenarios that cannot occur in a system that starts on $\Gamma(p)$ and is subject to a single-switch $p(t)$.

In the first scenario, following a switch, the system leaves the basin of attraction of $\Gamma(p)$, but fails to converge to an alternative attractor before the next
switch happens, re-enters the basin of attraction of $\Gamma(p)$ upon the second switch, and avoids P-tipping in spite of basin instability of $\Gamma(p)$. We refer to such events as ``rescue events"\cite{Marley2020}. 
Hence, basin instability of $\Gamma(p)$
for a given switch within a mulit-switch $p(t)$
does not guarantee the occurrence of tipping 
upon this particular switch.
For the second scenario, we extend the concept of partial basin instability to the whole basin of attraction of $\Gamma(p)$. 
Suppose that $\Gamma(p)$ is basin stable on $\Delta_p$, but its basin of attraction is partially basin unstable on $\Delta_p$. Following a switch, the trajectory  
moves away from $\Gamma(p)$ and enters the basin unstable part of the basin of attraction of $\Gamma(p)$, then the next switch happens, and the system undergoes P-tipping in the absence of basin instability of $\Gamma(p)$. 
Hence, partial basin instability of $\Gamma(p)$ need not be necessary for the occurrence of P-tipping with multi-switch $p(t)$.

Keeping in mind that multi-switch P-tipping is defined for all realisations of $p_\rho(t)$, it should be possible to show that, for multi-switch piecewise constant $p(t)$:
\begin{itemize}
    \item 
    Partial basin instability of $\Gamma(p)$ on $\Delta_p$ is sufficient for the occurrence of P-tipping in system~\eqref{eq:general}.
    \item
    If $p_{\rho}(t)$ allows trajectories to converge to $\Gamma(p)$ between all consecutive switches, then  partial basin instability of $\Gamma(p)$ on $\Delta_p$ is necessary and sufficient for the occurrence of P-tipping in system~\eqref{eq:general}.
\end{itemize}
}

\section{Partial basin instability and P-tipping in predator-prey models}
\label{sec:basin_instability_PP}

In this section, we start with classical autonomous bifurcation analysis of the predator-prey frozen systems~\eqref{eq:RM1}~and~\eqref{eq:May1} to identify parameter regions with bistability between  predator-prey cycles and extinction. Then, we show that predator-prey cycles can be partially basin unstable on several parameter paths $\Delta_r$ that lie within these regions of bistability. Finally, we demonstrate that  partial basin instability of predator-prey cycles on a path $\Delta_r$ explains
{ 
the counter-intuitive collapses to extinction that occur only from  certain phases of predator-prey oscillations}, and gives simple testable criteria for the occurrence of P-tipping in the nonautonomous  predator-prey system.

\subsection{Classical bifurcation analysis: Limit cycles
and bistability}
\label{sec:bif_analysis}

There are four ecologically relevant parameter regions in the predator-prey frozen systems~\eqref{eq:RM1} and~\eqref{eq:May1}, shown in Figure~\ref{fig:2parBD}. These regions have qualitatively different dynamics that  can be summarised in terms of stable states as follows:
\begin{itemize}
    \item {\em  Oscillatory Coexistence or Extinction:} The system is bistable and can either settle at the extinction equilibrium $e_0$, or  self-oscillate as it converges to the predator-prey limit cycle $\Gamma(r)$. Here is where P-tipping may occur; see the green regions in Fig.~\ref{fig:2parBD}.
    \item {\em  Stationary Coexistence or Extinction:}  The system is bistable and can settle  either at the extinction equilibrium $e_0$, or at the coexistence equilibrium $e_3(r)$; see the yellow regions in Fig.~\ref{fig:2parBD}.
    \item {\em Prey Only or Extinction:} The system is bistable and can settle either at the extinction  equilibrium $e_0$, or at the prey-only equilibrium $e_1(r)$; see the upper pink region in Fig.~\ref{fig:2parBD}\bluea.
    \item {\em Extinction:} The system is monostable and can only settle at the extinction equilibrium $e_0$; see the other pink regions in Fig.~\ref{fig:2parBD}.
\end{itemize}
The region boundaries are obtained via two-parameter bifurcation analysis using the numerical continuation software XPPAUT~\cite{ermentrout2002}. This analysis extends our discussion of the one-parameter bifurcation diagrams from Fig.~\ref{fig:1parBD}.
{  We refer to Appendix~\ref{app:cba} for the details of the bifurcation analysis, and to~\cite{kuznetsov2013} for more details on classical autonomous bifurcation theory.
}

\begin{figure}[t]
\centering 
{\includegraphics[width=1\linewidth]{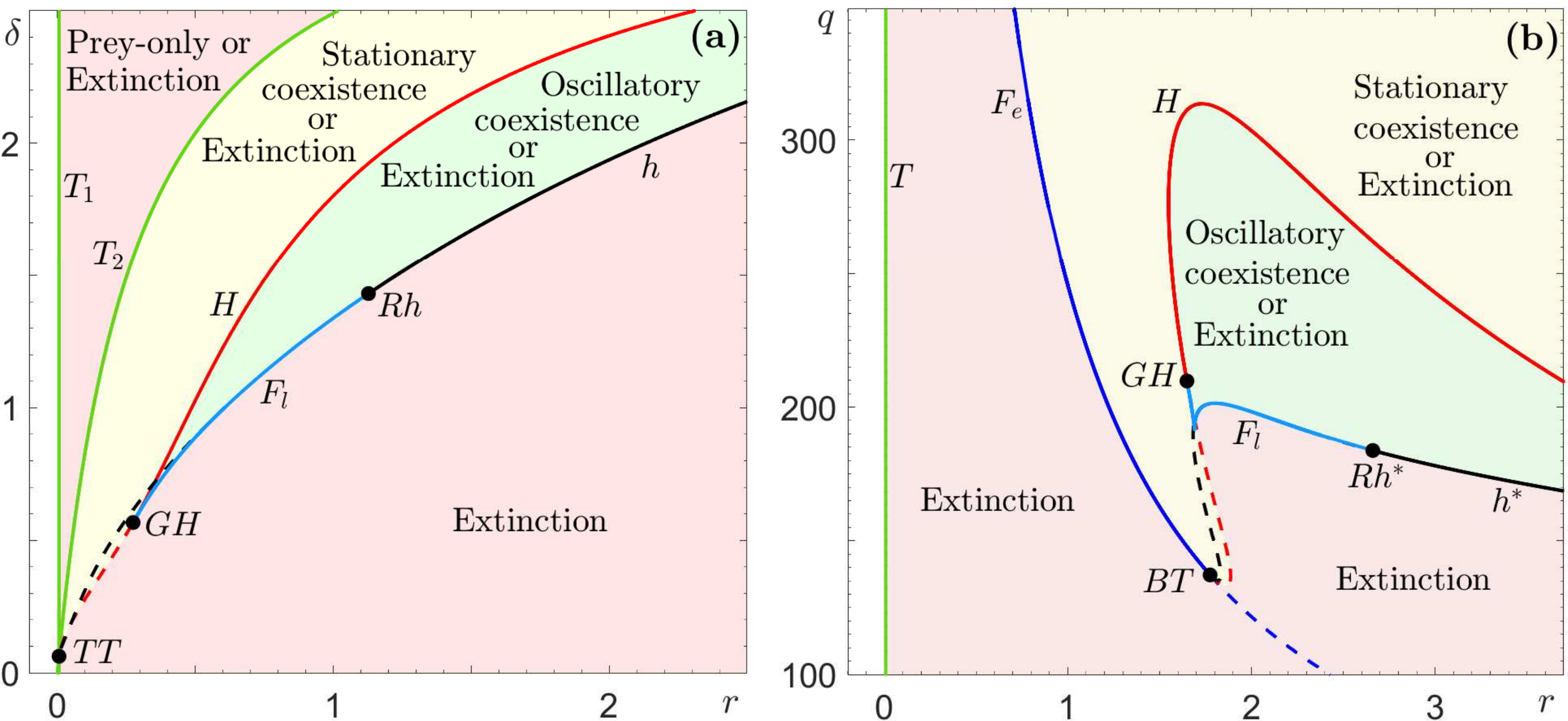}}
\caption{Two-parameter bifurcation diagrams for (a) the autonomous RMA frozen model~\eqref{eq:RM1} in the $(r,\delta)$ parameter plane, and (b) the autonomous May frozen model~\eqref{eq:May1} in the 
$(r,q)$ parameter plane. The other parameter values are given in Appendix~\ref{app:maymodel}, Table~\ref{tab:parValues}.
}
\label{fig:2parBD}
\end{figure}

\begin{figure}[t]
    \centering 
    \includegraphics[width=1\textwidth]{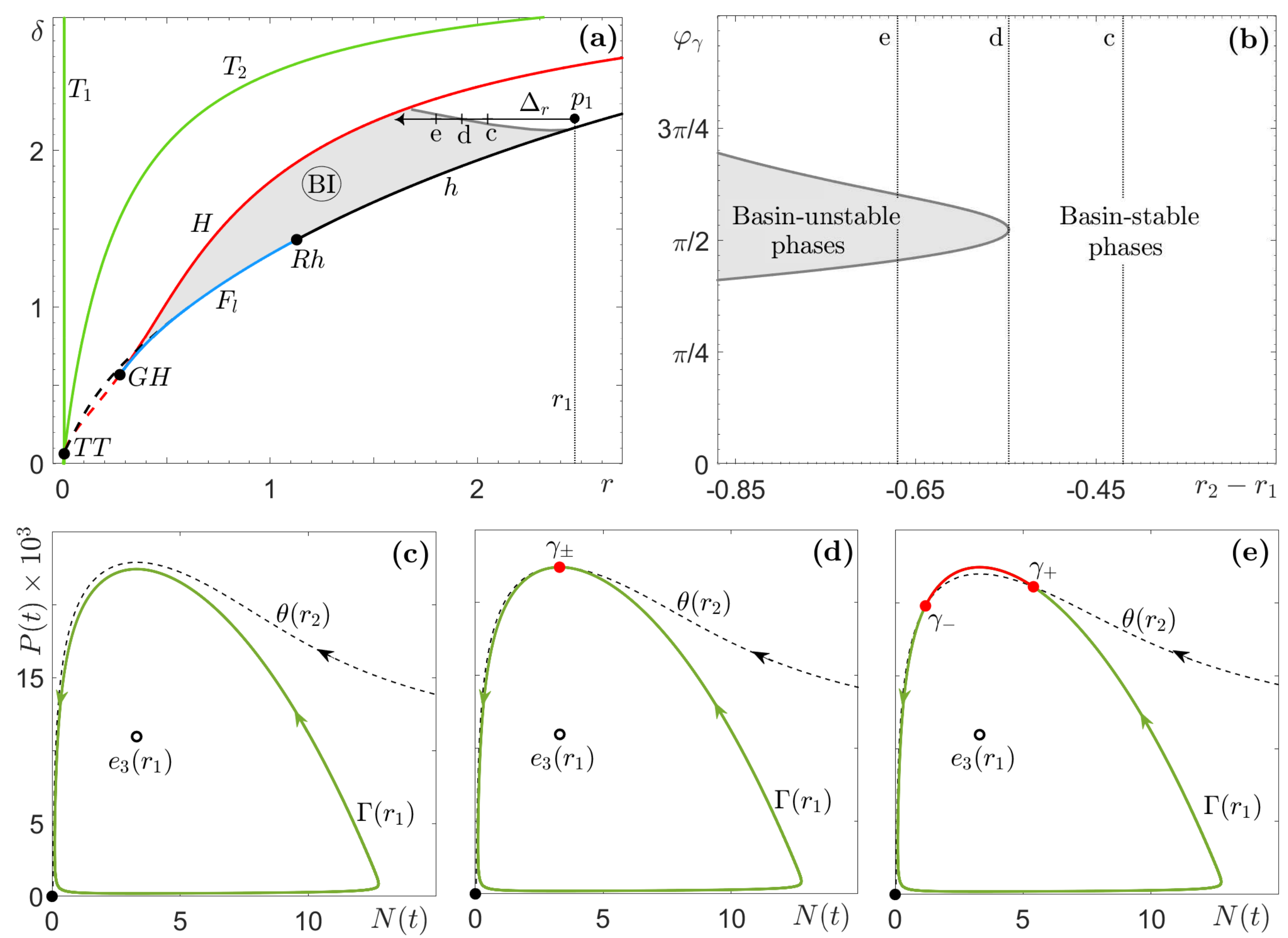}
    \caption{
    (a) The two-parameter bifurcation diagram for 
    the autonomous RM frozen model~\eqref{eq:RM1} from Fig.~\ref{fig:2parBD}\bluea\,with the addition of the (grey) region of partial basin instability, $BI(\Gamma,p_1)$ for $p_1 = (2.47,2.2)$, as defined by~\eqref{eq:BIR}, and the parameter path $\Delta_r$ from $p_1$.
    (b) The range of basin unstable phases for the predator-prey limit cycle $\Gamma(r)$ along $\Delta_r$.
    (c)--(e) Selected $(N,P)$ phase portraits showing (c) no basin instability for $r_2 = 2.05$, (d) marginal basin instability for $r_2 = 1.923$, and (e) partial basin instability of $\Gamma(r)$ on $\Delta_r$ for $r_2 = 1.8$. Basin stable parts of  $\Gamma(r)$ are shown in green, basin unstable parts of $\Gamma(r)$ are shown in red.
    The other parameter values are given in Appendix~\ref{app:maymodel}, Table~\ref{tab:parValues}.
    }
    \label{fig:BI_RM}
\end{figure}

\begin{figure}[t]
    \centering 
    \includegraphics[width=1\textwidth]{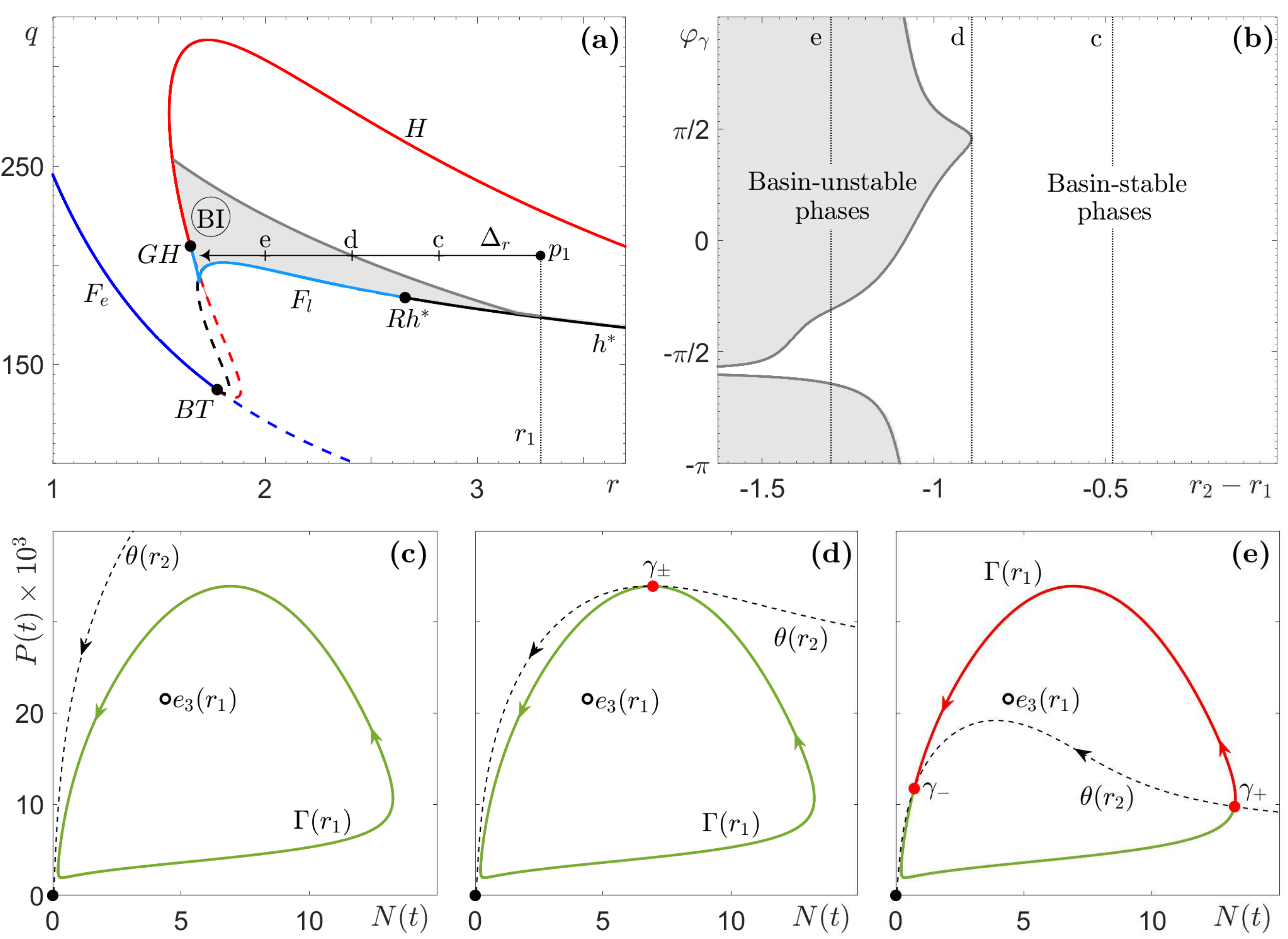}
    \caption{
    (a) The two-parameter bifurcation diagram for 
    the autonomous May frozen model~\eqref{eq:May1} from Fig.~\ref{fig:2parBD}\blueb\,with the addition of the (grey) region of partial basin instability, $BI(\Gamma,p_1)$ for $p_1 = (3.3,205)$, as defined by~\eqref{eq:BIR}, and the parameter path $\Delta_r$ from $p_1$.
    (b) The range of basin unstable phases for the predator-prey limit cycle $\Gamma(r)$ along $\Delta_r$.
    (c)--(e) Selected $(N,P)$ phase portraits showing (c) no basin instability for $r_2 = 2.82$, (d) marginal basin instability for $r_2 = 2.41$, and (e) partial basin instability of $\Gamma(r)$ on $\Delta_r$ for $r_2 = 2$. Basin stable parts of  $\Gamma(r)$ are shown in green, basin unstable parts of $\Gamma(r)$ are shown in red.
    The other parameter values are given in Appendix~\ref{app:maymodel}, Table~\ref{tab:parValues}.
    }
    \label{fig:BI_may}
\end{figure}

\begin{figure}[ht]
    \centering 
    \includegraphics[width=1\textwidth]{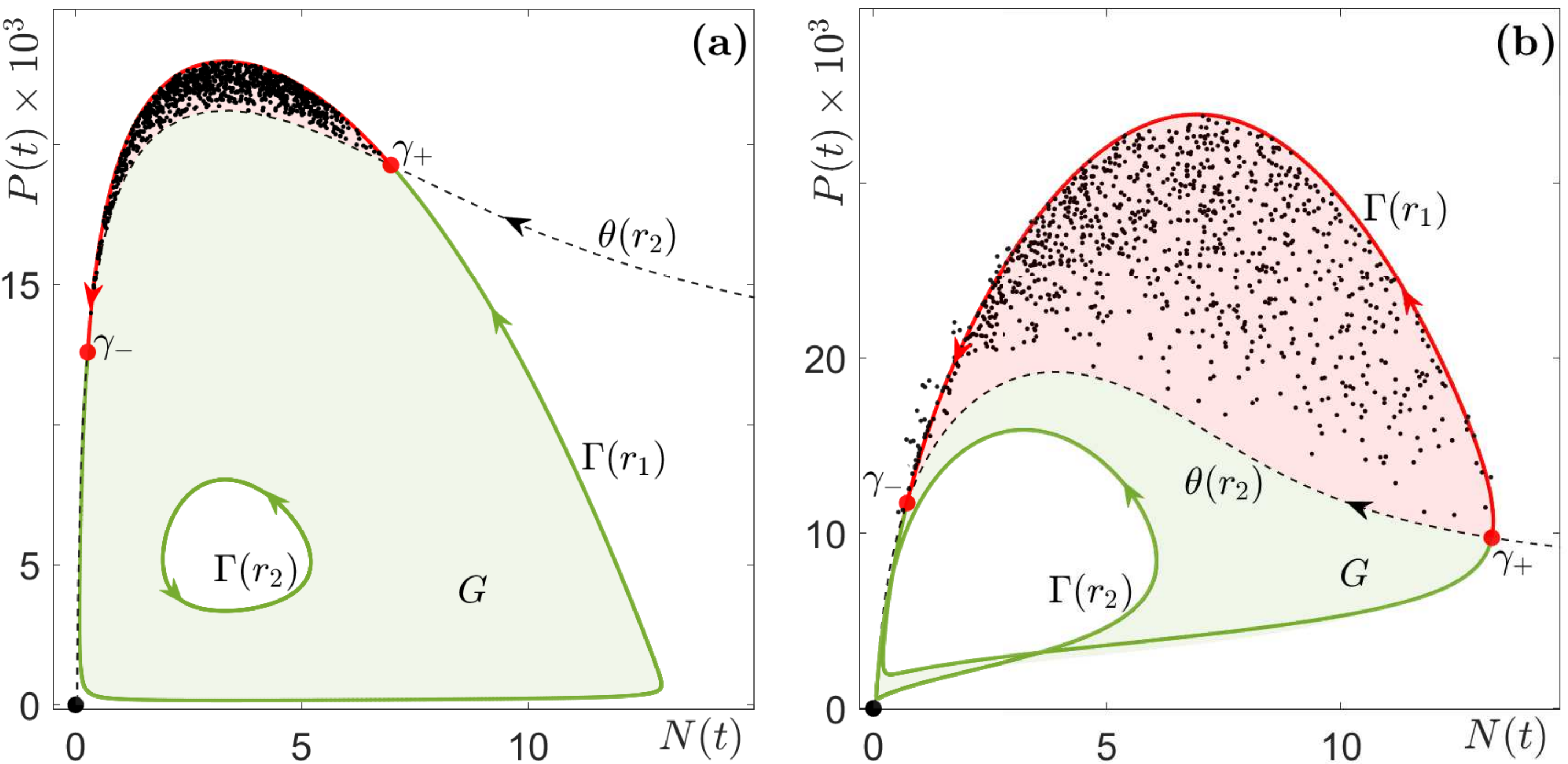}
    \caption{
     The concept of partial basin instability on a parameter path $\Delta_r$ with $r \in [r_2,r_1]$ (see Def.~\ref{def:partialBI_set}) is applied to the union $G$ of all predator-prey limit cycles $\Gamma(r)$ on the path (see Eq.\eqref{eq:G}) to explain the counter-intuitive P-tipping phenomenon uncovered in Figs.~\ref{fig:cli_dist_RM} and~\ref{fig:cli_dist_May}.
    The (black dots) states from which the system
    P-tips to extinction agree perfectly with the (pink) basin unstable parts of $G$ for
    (a) the RMA model~\eqref{eq:RM1} with $r_1=2.5$, $r_2=1.6$, and $\delta=2.2$, and 
    (b) the May model~\eqref{eq:May1}  with $r_1=3.3$, $r_2=2$, and $q=205$. The other parameter values are given in Appendix~\ref{app:maymodel}, Table~\ref{tab:parValues}.
   }
    \label{fig:BI_strip_may}
\end{figure}

\subsection{Partial basin instability of predator-prey cycles}

We now concentrate on the bistable regions labelled ``Oscillatory Coexistence or Extinction",
 apply Definitions~\ref{def:partialBI_set} and~\ref{def:BI} to predator-prey cycles, and show that
\begin{itemize}
    \item 
    Predator-prey cycles $\Gamma(r)$ can be partially basin unstable on suitably chosen parameter paths.
    \item Both predator-prey models have large parameter regions of partial basin instability. When superimposed onto classical bifurcation diagrams, these regions reveal P-tipping instabilities that cannot be captured by the classical autonomous bifurcation analysis.
     \item
    Partial basin instability of $\Gamma(r)$ in the frozen system is sufficient for the occurrence of P-tipping in the nonautonomous system.
\end{itemize}

The base attractor is the predator-prey limit cycle $\Gamma(r)$, and the alternative attractor is the extinction equilibrium $e_0$.
The basin boundary of $\Gamma(r)$ 
is the Allee threshold $\theta(r)$, which can be computed as the stable invariant manifold of the saddle equilibrium $e_s(r)$:
$$
\theta(r) := W^s(e_s(r)) = 
\left\{
(N_0,P_0)\in\mathbb{R}^2: 
\left(N(t),P(t)\right)\to e_s(r)\;\;\mbox{as}\;\; t\to +\infty
\right\}.
$$
In the RMA frozen model, $e_s(r)$ is the saddle Allee equilibrium $e_2$, whereas in the May frozen model,  $e_s(r)$ is the saddle coexistence equilibrium $e_4(r)$ that lies near the repelling Allee equilibrium $e_2$.
To uncover the full extent of partial basin instability for the predator-prey cycles $\Gamma(r)$, 
we fix a point $p_1$ that lies within the region
labelled ``Oscillatory Coexistence or Extinction"; see Figs.~\ref{fig:BI_RM}\bluea~and~\ref{fig:BI_may}\bluea. Then, we apply definition~\eqref{eq:BIR} to identify all points $p_2$ 
within this region
such that the predator-prey limit cycle $\Gamma(p_1)$ is not contained in the closure of the basin of attraction of $\Gamma(p_2)$.
The ensuing (light grey) regions of {\em partial basin instability} bounded by the (dark grey) curves of {\em marginal basin instability} are superimposed on the classical bifurcation diagrams in Figs.~\ref{fig:BI_RM}\bluea~and~\ref{fig:BI_may}\bluea. Note that the basin instability regions $BI(\Gamma,p_1)$ depend on the choice of $p_1$, and are labelled simply $BI$ for brevity.
To illustrate the underlying mechanism in the $(N,P)$ phase plane, {  we restrict} to parameter paths $\Delta_r$ that are straight horizontal lines from $p_1$ in the direction of decreasing $r$. In other words, we set $p=r$; see Figs.~\ref{fig:BI_RM}\bluea~ and~\ref{fig:BI_may}\bluea.
When $r_2\in\Delta_r$ lies on the dark grey curve of marginal basin instability, there is a single point of tangency between $\Gamma(r_1)$ and and $\theta(r_2)$, denoted $\gamma_\pm$ in Figs. \ref{fig:BI_RM}\blued\, and \ref{fig:BI_may}\blued.
When $r_2\in\Delta_r$ lies within the light grey region of partial basin instability, there are two points of intersection between $\Gamma(r_1)$ and and $\theta(r_2)$, denoted $\gamma_-$ and $\gamma_+$ in Figs.~\ref{fig:BI_RM}\bluee~and~\ref{fig:BI_may}\bluee. 
These two points bound the (red) part of the cycle that is basin unstable. The corresponding {\em basin unstable phases} are shown in Figs.~\ref{fig:BI_RM}\blueb~and~\ref{fig:BI_may}\blueb. 
Suppose that $r(t) = r_1$, and a trajectory of the nonautonomous system is on the same side of $\theta(r_2)$ as the (red) basin unstable part of $\Gamma(r_1)$. Then, when $r(t)$ changes from $r_1$ to $r_2$, the trajectory finds itself in the basin of attraction of the extinction equilibrium $e_0$, and will thus approach $e_0$.

The striking similarity is that predator-prey cycles from both models exhibit partial basin instability upon decreasing $r$. This decrease corresponds to climate-induced decline in the resources or in the quality of habitat.  Furthermore, while the predator-prey cycle in the May model has a noticeably wider range of basin unstable phases, neither cycle appears to be totally basin unstable. All these observations are consistent with the counter-intuitive properties (P1)--(P3) of P-tipping identified in the 
numerical experiments in Sec.~\ref{sec:cipst}.

\subsection{Partial basin instability explains P-tipping}

Now, we can demonstrate that partial basin instability of $\Gamma(r)$ in the autonomous predator-prey frozen systems explains and gives simple testable criteria for the occurrence of P-tipping in the nonautonomous systems. The families of attracting predator-prey limit cycles $\Gamma(r)$, and their basin boundaries $\theta(r)$, are the two crucial components of the discussion below.

First, recall the numerical P-tipping experiments from Sec.~\ref{sec:cipst}, 
and focus on the crescent shaped `clouds' of states from which P-tipping occurs; see the black dots Figs.~\ref{fig:cli_dist_RM}~and~\ref{fig:cli_dist_May}. Second, recognise that each P-tipping
event occurs for a different value of $r_\textrm{pre}\in[r_2,r_1]$, and thus from  a different predator-prey cycle $\Gamma(r_\textrm{pre})$ {  or its neighbourhood}.
Therefore, we must consider the union of all cycles from the family 
along the parameter path $\Delta_r$ bounded by $r_2$ and $r_1$:
{ 
\begin{equation}
\label{eq:G}
    G:= \big\{ \Gamma(r):\, r\in[r_2,r_1] \big\},
\end{equation}
}
which is shown in Fig.~\ref{fig:BI_strip_may}. Furthermore, we use the basin boundary $\theta(r_2)$ of the cycle $\Gamma(r_2)$ at the left end of the path to divide $G$ into its (light green) {\em basin stable part} and (pink) {\em basin unstable part} on $\Delta_r$
with $r\in [r_2,r_1]$.
The `clouds' of states from which P-tipping occurs agree perfectly with the basin unstable part of $G$. 
A few black dots that lie slightly outside the basin unstable part of $G$ in Fig.~\ref{fig:BI_strip_may}\blueb~correspond to those P-tipping events that occur from 
{  states that have not converged to the limit cycle $\Gamma(r_\textrm{pre})$ and lie visibly away from $\Gamma(r_\textrm{pre})$ when the switch that causes tipping happens. Those P-tipping events occur if the time interval $\ell$ during which $r(t)=r_\textrm{pre}$ is shorter than the time of convergence to the limit cycle $\Gamma(r_\textrm{pre})$ in the autonomous frozen system. For this particular parameter path, we could not detect any tipping events in the absence of partial basin instability of $\Gamma(r)$. 
However, we could detect multiple ``rescue events" described in Section~\ref{sec:basin_instability}\ref{sec:pbipt} (not shown in the figure). In a ``rescue event",
the system leaves the basin of attraction of the predator-pray 
cycle after a switch that gives basin instability, but avoids tipping upon this switch because it re-enters the basin of attraction of the predator-prey cycle after some future switch.
 ``Rescue events" occur if the time interval $\ell$ during which $r(t)=r_\textrm{pre}$ is shorter than the time of convergence to the extinction equilibrium $e_0$ in the autonomous frozen system. In summary, the general concept of partial basin instability of $\Gamma(r)$ on a parameter path $\Delta_r$ from  Definition~\ref{def:partialBI_set} is an excellent indicator for the occurrence of P-tipping in the RMA~\eqref{eq:RM1}  and May~\eqref{eq:May1} models.
}

\section{Conclusions}
\label{sec:conclusions}

This paper studies nonlinear tipping phenomena, or critical transitions, in nonautonomous dynamical systems with time-varying
external inputs. In addition to the well-known critical factors for tipping in systems that are stationary in the absence of external inputs, {  namely bifurcation, rate of change, and noise,} we identify here the
{\em phase} {  of predator-prey limit cycles and nearby  oscillations} as a new critical factor in systems that are cyclic in the absence of external inputs. 

To illustrate the new tipping phenomenon in a realistic setting, we consider two
paradigmatic predator-prey models with an Allee effect, namely the Rosenzweig-MacArthur model~\cite{rosenzweig1963} and the May model~\cite{May2019}. 
We describe temporal changes in the carrying capacity of the ecosystem
with real climate variability records from different communities in the boreal and deciduous-boreal forest~\cite{Marley2020}, and 
use realistic parameter values for the Canada lynx and snowshoe hare system~\cite{tyson:2009, strohm:2009}.
Monte Carlo simulation reveals a robust phenomenon, where a drop in the carrying capacity 
tips the ecosystem from a predator-prey {  oscillations}  to extinction. The special and somewhat counter-intuitive result is that tipping occurs: (i) without crossing any bifurcations, and (ii) only from certain phases of the {  oscillations}. Thus, we refer to this phenomenon as {\em {  phase} tipping} {  ({\em partial tipping})}, or simply {\em P-tipping}.  Intuitively, P-tipping from predator-prey {  oscillations} to extinction arises because
a fixed drop in prey resources has distinctively different effects when applied during the phases of the {  oscillations} with the fastest growth and the fastest decline of  prey.

Motivated by the outcome of the simulation, we develop an accessible and general mathematical framework to analyse P-tipping and reveal the underlying dynamical mechanism. Specifically, we employ notions from set-valued dynamics to extend the geometric concept of basin instability, introduced 
in~\cite{o2019} for equilibria, to limit cycles. The main idea is to consider the autonomous frozen system  with different but fixed-in-time values of the external input along some parameter path, and examine the position of the limit cycle at some point on the path relative to the position of its basin of attraction at other points on the path. 
First, we define different types of basin instability for limit cycles, and focus on {\em partial basin instability} that does not exist for equilibria. Second, we show that  partial basin instability in the autonomous frozen
system is {  necessary and} sufficient for  the occurrence of P-tipping in the nonautonomous system with a {  single-switch} external input. {  Furthermore, we discuss applicability of this result to multi-switch external inputs.} Third, we relate our results  to those of Alkhayuon and Ashwin~\cite{alkhayuon2018} on rate-induced tipping from limit cycles. 

We then apply the general framework to the ecosystem models and explain the counter-intuitive transitions from {  certain phases of} predator-prey {  oscillations} to extinction.
We use classical autonomous bifurcation analysis to identify parameter regions with bistability between predator-prey cycles and extinction. In this way, we show that predator-prey cycles can be partially basin unstable on typical parameter paths  within these bistability regions. Moreover, we superimpose regions of partial basin instability onto classical autonomous bifurcation diagrams to reveal P-tipping instabilities that are robust but cannot be captured by classical bifurcation analysis. 

We believe that this approach will enable scientists to uncover P-tipping in many different cyclic systems from applications, ranging from natural science  and engineering to economics.
For example, the predator-prey paradigm is found across biological applications modelling, including epidemiology \cite{eilersen:2020}, pest control \cite{buxton:2020}, fisheries \cite{pujaru:2020}, cancer \cite{griffiths:2020, georgiou:2020}, and agriculture \cite{murano:2019, fort:2017}.  The fundamental relationship described in predator-prey models also appears in many areas outside of the biological sciences, with recent examples including atmospheric sciences \cite{lunderman:2020}, economic development \cite{mehlum2003, mehlum2006}, trade and financial crises~\cite{mesly:2020, edwards:2020,huck:2020}, and land management \cite{jenkins:2020}.
External disturbances of different kinds exist in all of these systems,   suggesting that the P-tipping behaviours discovered in this paper are of broad practical relevance.

Furthermore, the concept of P-tipping, for base states that are attracting limit cycles with regular basin boundaries, naturally extends to more complicated base states, such as quasiperiodic tori and chaotic attractors, and to irregular (e.g. fractal) basin boundaries~\cite{kaszas2019,alkhayuon2020,ashwin2021,lohmann2021}. 
Defining phase for 
more complicated cycles in higher dimensions, and for non-periodic 
{  oscillations}, will usually require a different approach. For example, 
{
one could define phase for an attracting limit cycle in a multidimensional system in terms of its period $T$ as a linear function of time $\varphi= 2\pi t/T$. This definition is independent of the coordinate system, and can be extended to every point in the basin of attraction using isochrones.
Another approach is to} work with a time series of a single observable and use the Hilbert transform to construct the complex-valued analytic signal, and then extract the so-called {\em instantaneous phase}~\cite{bracewell1986,rosenblum1996}. This phase variable may provide valuable physical insights into the problem of P-tipping when the polar coordinate approach does not work, or when the base attractor or its basin boundary have complicated geometry and are difficult to visualise. 
Such systems will likely exhibit even more counter-intuitive tipping behaviours,
but their analysis requires
mathematical techniques  beyond the scope of this paper.

Another interesting research question is that of early warning indicators for P-tipping.
In the past decade, many studies of noisy real-world time-series records revealed prompt changes in the statistical properties of the data prior to tipping~\cite{scheffer2009,ditlevsen2010,lenton2011,ritchie2016}, which appear to be generic for tipping from equilibria. 
However, it is  unclear if these statistical early warning indicators appear for P-tipping, or if one needs to identify
alternatives such as Finite Time Lyapunov Exponent (FTLE)~\cite{remo2019}.


\section*{Data Accessibility and Authors’ Contributions}
The codes used to conduct simulations and  generate figures are available via the GitHub repository \cite{alkhayuon2021}. All authors contributed to the numerical computations and to the writing of the manuscript. HA and SW contributed to the theoretical results in Sec.~\ref{sec:basin_instability}

\section*{Funding and Acknowledgements}
HA and SW are funded by Enterprise Ireland grant No. 20190771. RCT is funded by NSERC Discovery Grant RGPIN-2016-05277. We would like to thank Johan Dubbeldam, Cris Hasan, Bernd Krauskopf, Jessa Marley,  Emma McIvor and three anonymous reviewers for their constructive and insightful comments on this work, including an alternative phase definition in terms of isochrones.


\bibliographystyle{unsrt}
\bibliography{biblio}


\appendix
\section{Equilibria and bifurcations of the May frozen system}
\label{app:maymodel}

\begin{figure}[h]
    \centering 
    \includegraphics[width=1\textwidth]{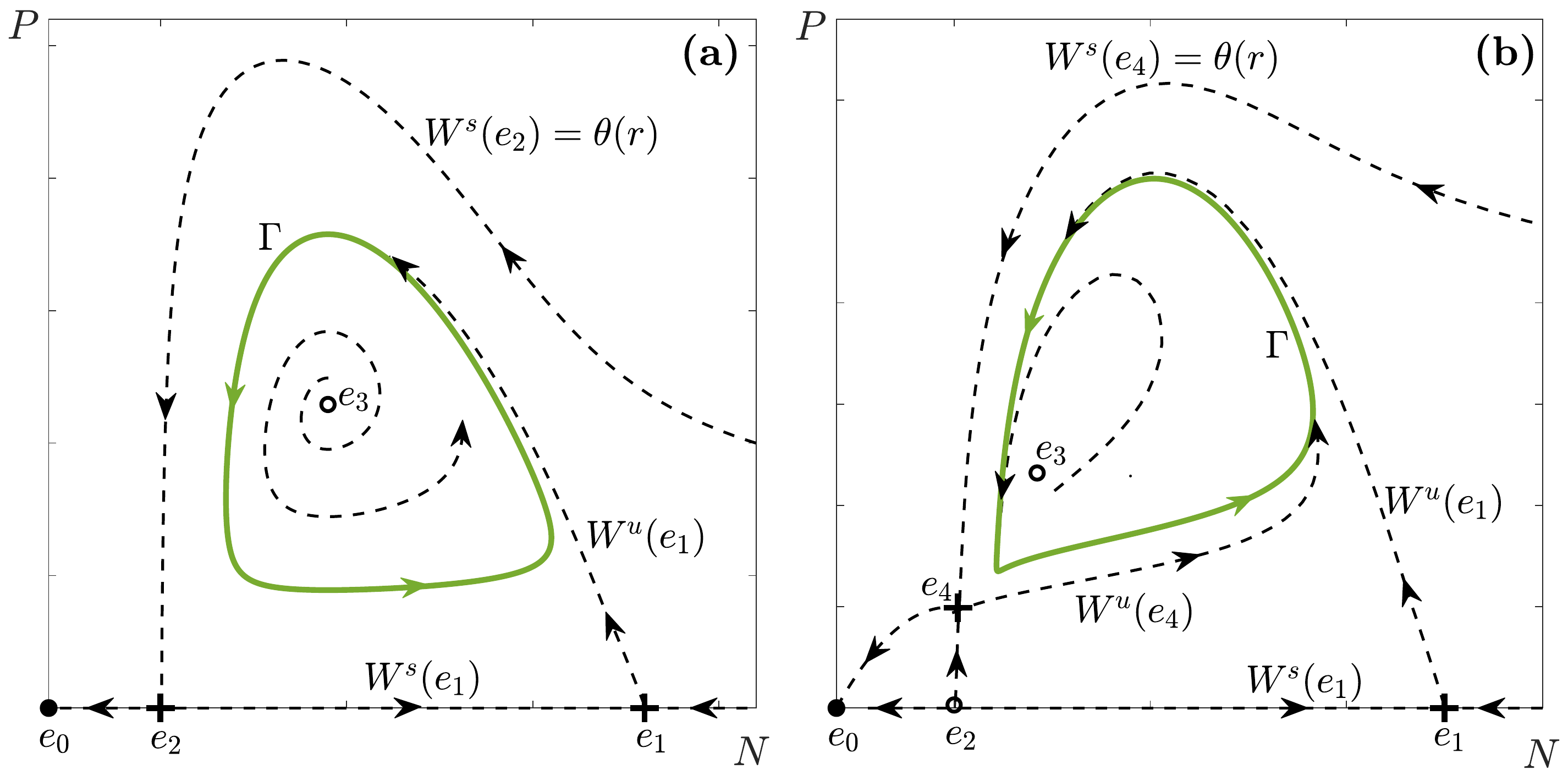}
    \caption{ 
    Schematic phase portraits showing stable (black dots), unstable (black circles) and saddle (black plus signs) equilibria; the stable/unstable manifolds (black dashed curves) of the saddle equilibria; and the (green) limit cycles $\Gamma$ in (a) the autonomous RMA frozen model~\eqref{eq:RM1} with $r\in(1.53,\,2.61)$ and (b) the autonomous May frozen model~\eqref{eq:May1} with $r\in(1.66,\,3.81)$.
    The other parameter values are given in Appendix~\ref{app:maymodel}, Table~\ref{tab:parValues}.
    }
    \label{fig:SchematicPhase}
\end{figure}

The May frozen system can have at most five stationary solutions (equilibria), which are derived by setting $\dot{N} = \dot{P} = 0$ in~\eqref{eq:May1}. In addition to the {\em extinction equilibrium} 
$e_0$, which is always stable,
there is a {\em prey-only} equilibrium $e_1(r)$, the {\em Allee equilibrium}  $e_2$, and
 {\em two coexistence equilibria} $e_3(r)$ and $e_4(r)$, whose stability depends on the system parameters
\begin{equation}\label{eq:May_equilibria}
   e_0=(0,\epsilon/q),~e_1(r) = (r/c,0),~e_2 = (\mu,0),~e_3(r)=\left(N_3(r),P_3(r)\right),~e_4(r)=\left(N_4(r),P_4(r)\right).
\end{equation}
In the above, we include the argument $(r)$ when an equilibrium's position depends on $r$.
The prey population densities of the coexistence equilibria $e_3(r)$ and $e_4(r)$ are the two non-negative roots, denoted $N_3(r)$ and $N_4(r)$ respectively, of the third degree polynomial
\begin{equation}\label{}
N^3 
- \left(\mu -\beta + \frac{r}{c} - \frac{\alpha}{cq} \right)N^2
- \left(\beta\mu + \frac{r(\beta - \mu)}{c} -\frac{\alpha(\nu + \epsilon)}{cq}\right)N
+\left( \frac{r\beta\mu}{c} + \frac{\alpha\nu\epsilon}{cq} \right) = 0,
\end{equation}
and the corresponding predator population densities are given by
$$
P_{i}(r) = \frac{N_{i}(r)+\epsilon}{q},\;\;i=3,4.
$$

The one-parameter bifurcation diagram of the  May frozen system in Fig.~\ref{fig:1parBD}\blueb \, reveals different bifurcations and bistability. 
Most importantly, as $r$ is increased, the coexistence equilibrium 
$e_3(r)$ gives rise to a stable limit cycle $\Gamma(r)$ via a safe supercritical Hopf bifurcation, denoted $H_1$. The cycle exists for a range of $r$, and disappears in a reverse supercritical Hopf bifurcation,  denoted $H_2$, for larger $r$.

\begin{table}[ht]
    \caption{Realistic parameter values for the RMA model~\eqref{eq:RM1} and the May model~\eqref{eq:May1}, estimated from Canada lynx and snowshoe hare data \cite{strohm:2009,tyson:2009}.}
    \label{tab:parValues}
    \begin{center}
        \begin{tabular}{ c|c|c|c}
            Parameter  & Units         & RMA model & May model\\
            \hline
            $r$        &  1/yr                 & $[0,3]$   & $[0,4]$  \\
            $c$        &  ha/(prey$\cdot$yr)   & $0.19$  & $0.22$ \\
            $\alpha$   &  prey/(pred$\cdot$yr) & $800$     & $505$    \\
            $\beta$    &  prey/ha              & $1.5$     & $0.3$    \\ 
            $\chi$     &  pred/prey            & $0.004$   & n/a      \\
            $\delta$   &  1/yr                 & $2.2$     & n/a      \\
            $s$        &  1/yr                & n/a       & $0.85$   \\
            $q$        &  prey/pred            & n/a       & $205$    \\
            $\mu$      &  prey/ha              & $0.03$    & $0.03$   \\
            $\nu$      &  prey/ha              & $0.003$   & $0.003$  \\
            $\epsilon$ &  prey/ha              & n/a       & $0.031$
\end{tabular}
\end{center}
\end{table}

\section{Numerical computations of invariant measures}
\label{App:invariantMeasures}
We estimate the invariant measure $\mu(\varphi_\gamma)$ as the fallowing: 
\begin{enumerate}
    \item We start with a large number $J$ of initial conditions, evenly distributed around the periodic orbit $\Gamma$ and solve the system subject to these initial conditions up to time $T$. This gives $J$ trajectories $x_j(t)$ for $j = 1,2,\ldots,J$ and $t\in[0,T]$.
    \item We consider the final points of all of these trajectories, $x_j(T)$ and compute the phase of cycle for these points $\varphi_{x_j}$, for $j = 1,2,\ldots, J$.
    \item For any point $\gamma \in \Gamma$, suppose that for some $\varepsilon > 0$ there are $K$ points with the respective phases $\varphi_{x_k} \in [\varphi_{\gamma} -\varepsilon,\varphi_{\gamma} + \varepsilon]$, for $k = 1,2, \ldots K$.  We then define the invariant measure $\mu(\varphi_\lambda)$ as:
    $$
    \mu(\varphi_\lambda) = \frac{K}{J}.
    $$
\end{enumerate}

In Fig.~\ref{fig:cli_dist_RM_past_h}\bluee \ we choose $J = 10000$, $T = 100$, and $\varepsilon = 0.1$.

\section{Classical autonomous bifurcation analysis.}
\label{app:cba}

We start with the autonomous RMA frozen model~\eqref{eq:RM1}, consider the climatic parameter $r$ together with the predator mortality rate $\delta$, and examine the bifurcation structure in the $(r,\delta)$ parameter space in Fig.~\ref{fig:2parBD}\bluea. The dynamics are organised by the codimension-two  {\em double-transcritical} bifurcation point $TT$, due to an intersection of two transcritical bifurcation curves, namely $T_1$, along which $e_1(r)$ and $e_2(r)$ meet and exchange stability, and $T_2$, along which $e_1(r)$ and $e_3(r)$ meet and exchange stability. (Since a Hopf bifurcation for a complex variable $z = r\,e^{i\theta}$ is a transcritical bifurcation for the ``amplitude" variable $\rho = r^2$, we expected the unfolding of $TT$ to be the same as one of the unfoldings in the ``amplitude equations" for the Hopf-Hopf bifurcation. This, however, is not the case. The unfolding of $TT$ is akin, although not identical, to the unfolding of the ``amplitude equations" for the Hopf-Hopf bifurcation point in subregion 6 of the ``difficult" case from Ref.~\cite[Sec.8.6]{kuznetsov2013}.)
$TT$ is the origin of the  Hopf $H$ and heteroclinic $h$ bifurcation curves, both of which are subcritical (dashed) near $TT$.
Furthermore, $H$ changes from subcritical (dashed) to supercritical (solid) at the codimension-two {\em generalised Hopf} bifurcation point $GH$, from which the curve $F_l$ of the fold of limit cycles emerges. The stable limit cycle $\Gamma(r)$ shrinks onto $e_3(r)$ along the supercritical (solid) part of $H$, or collides with an unstable limit cycle and disappears along $F_l$.
Then, $F_l$ has another endpoint on $h$. This point is the codimension-two {\em resonant heteroclinic} bifurcation point $Rh$, where $h$ changes from subcritical (dashed) to supercritical (solid). The stable limit cycle $\Gamma(r)$ collides simultaneously with two saddles, $e_1(r)$ and $e_2$, and disappears along the supercritical (solid) part of $h$. Our main focus is on the (green) region of bistability between  oscillatory coexistence $\Gamma(r)$ and extinction $e_0$.
This region is bounded by the three bifurcation curves along which the stable limit cycle $\Gamma(r)$ disappears: the fold of limit cycles $F_l$, the (solid) supercritical part of the Hopf curve $H$, and the (solid) supercritical part of {  the heteroclinic curve} $h$. 
Finally, note that there is a third transcritical bifurcation curve corresponding to $T_0$ in the inset of Fig.~\ref{fig:1parBD}\bluea. This curve is not shown in Fig~\ref{fig:2parBD}\bluea~for clarity reasons; it lies very close to $T_1$ and is not relevant to our study.

For the autonomous May frozen model~\eqref{eq:May1}, we consider the climatic parameter $r$ together with $q$. Here, $q$ specifies the minimum prey-to-predator biomass ratio required for predator population growth, and can be thought of as an `equivalent' of the predator mortality rate from the RMA frozen model~\eqref{eq:RM1}. The qualitative picture, shown in Fig.~\ref{fig:2parBD}\blueb, is very similar to that for the RMA frozen model in Fig.~\ref{fig:2parBD}\bluea. The main difference is that the organising centre for the dynamics is the codimension-two {\em Bogdanov-Takens} bifurcation point $BT$.  Furthermore, instead of the three transcritical bifurcation curves there is just one, denoted $T$, along which $e_1(r)$ and $e_2$ meet and become degenerate, together with a single (dark blue) curve $F_e$ of fold of equilibria, where $e_3(r)$ and $e_4(r)$ become degenerate and disappear.
As a result, the region of ``Extinction or Prey Only" is gone, leaving just three ecologically relevant parameter regions. The heteroclinic bifurcation curve $h$ is replaced by a {\em homoclinic} bifurcation curve $h^*$, along which $\Gamma(r)$ collides with one saddle, namely $e_4(r)$, and disappears. The resonant heteroclinic point $Rh$ is replaced by a {\em resonant homoclinic} point $Rh^*$.
Most interestingly, except for the change from $h$ to $h^*$, the boundary of the (green) region of  bistability between  oscillatory coexistence $\Gamma(r)$ and extinction $e_0$ consists of the same bifurcation curves as in the RMA frozen model.

\end{document}